\newcommand{\CC}{\mathbb{C}}
\newcommand{\ZZ}{\mathbb{Z}}
\newcommand{\sS}{\mathbb{S}}
\newcommand{\NN}{\mathbb{N}}
\newcommand{\RR}{\mathbb{R}}
\newcommand{\calC}{\mathcal{C}}
\newcommand{\calB}{\mathcal{B}}
\newcommand{\calX}{\mathcal{X}}
\newcommand{\calD}{\mathcal{D}}
\newcommand{\calH}{\mathcal{H}}
\newcommand{\calJ}{\mathcal{J}}
\newcommand{\JLf}[1]{\ell_{#1}}
\newcommand{\Id}{\operatorname{I}}
\newcommand{\norm}[1]{\left\lVert {#1} \right\rVert}
\newcommand{\lnorm}[1]{\downharpoonleft\!\!\!| {#1} |\!\!\!\downharpoonright}
\newcommand{\barrier}{\mathfrak{b}}
\newcommand{\barrierMIN}{\barrier^{\mathrm{min}}}
\newcommand{\barrierTR}{\barrier^{\mathrm{TR}}}
\newcommand{\sprod}[2]{\left\langle {#1}, {#2} \right\rangle}
\newcommand{\tr}{\operatorname{tr}}
\newcommand{\sigmaP}{\sigma_{\mathrm{p}}}
\newcommand{\sigmaAC}{\sigma_{\mathrm{ac}}}
\newcommand{\sigmaS}{\sigma_{\mathrm{sing}}}
\newcommand{\Msing}{{M_{\mathrm{sing}}}}
\newcommand{\Mac}{{M_{\mathrm{ac}}}}
\newcommand{\Sac}{S_{\mathrm{ac}}}
\newcommand{\Sacr}{S_{\mathrm{ac}, \mathrm{r}}}
\newcommand{\Sacd}{S_{\mathrm{ac}, \mathrm{d}}}
\newcommand{\Ssing}{S_{\mathrm{sing}}}
\newcommand{\Dom}{\mathrm{Dom}}
\newcommand{\rank}{\mathrm{rank}}
\newcommand{\ud}{{\: \rm d}}
\newcommand{\cl}[1]{\operatorname{cl}(#1)}
\newcommand{\clleb}[1]
{\overline{#1}^{_\mathrm{e}}}
\renewcommand{\Im}{\operatorname{Im}}
\renewcommand{\Re}{\operatorname{Re}}
\newtheorem{theorem}{Theorem}
\newtheorem{lemma}[theorem]{Lemma}
\newtheorem{proposition}[theorem]{Proposition}
\newtheorem{fact}[theorem]{Fact}
\newtheorem{corollary}[theorem]{Corollary}
\theoremstyle{definition}
\newtheorem{example}[theorem]{Example}
\newtheorem{remark}[theorem]{Remark}
\newtheorem{definition}[theorem]{Definition}
\numberwithin{equation}{section}
\numberwithin{theorem}{section}
\DeclareMathOperator{\lin}{lin}
\newcommand{\bde}{\begin{description}}
\newcommand{\ede}{\end{description}}
\newcommand{\beql}[1]{\begin{equation}\label{#1}}
\newcommand{\eeq}{\end{equation}}
\newcommand{\kwa}{$\Box$}
\newcommand{\Kwa}{\hfill\kwa\bigskip}
\newcommand{\strz}{\longrightarrow}
\newcommand{\opcje}[4]{\left\{
    \begin{array}{ll}#1&\mbox{for $#2$}\\#3&\mbox{for $#4$}\end{array}\right.}
\newcommand{\N}{\NN}
\newcommand{\Nk}[1]{\N_{#1}}  
\newcommand{\Nz}{\Nk{0}}    
\newcommand{\Nmj}{\Nk{-1}}
\newcommand{\eps}{\epsilon}
\newcommand{\si}{\sigma}
\newcommand{\la}{\lambda}
\newcommand{\de}{\delta}
\newcommand{\om}{\omega}
\newcommand{\Mdc}{M_{d}(\CC)}
\newcommand{\Mddc}{M_{2d}(\CC)}
\newcommand{\lesc}{\left\langle} 
\newcommand{\risc}{\right\rangle}
\newcommand{\col}[2]{#1^{\{#2\}}} 
\newcommand{\el}{\ell}
\newcommand{\lnz}[1]{\el(\Nz, #1)}
\newcommand{\lnmj}[1]{\el(\Nk{-1}, #1)}
\newcommand{\lnk}[2]{\el(\Nk{#1}, #2)}
\newcommand{\ldnz}[1]{\el^2(\Nz, #1)}
\newcommand{\ldnmj}[1]{\el^2(\Nk{-1}, #1)}
\newcommand{\sprz}[1]{\overline{#1}}
\newcommand{\Bor}{\mathrm{Bor}}
\newcommand{\restr}[2]{#1\hspace{-1.0ex}  \restriction_{#2}}
\newcommand{\Ran}{\operatorname{Ran}}
\newcommand{\nic}[1]{} 
\newcommand{\BLW}{L(W)}
\newcommand{\GEV}[1]{\mathrm{GEV}_{-1}\hspace{-0.1em}\left(#1\right)}
\newcommand{\MGEV}[1]{\mathrm{MGEV}_{-1}\hspace{-0.1em}\left(#1\right)}
\newcommand{\NMGEV}[1]{\mathrm{MGEV}_{\hspace{-0.1em}\mathrm{nor}\hspace{0.1em}}\hspace{-0.1em}\left(#1\right)}
\newcommand{\GEVze}[1]{\mathrm{GEV}\hspace{-0.1em}\left(#1\right)}
\newcommand{\MGEVze}[1]{\mathrm{MGEV}\hspace{-0.1em}\left(#1\right)}
\newcommand{\GEVld}[1]{\mathrm{GEV}_{\ell^2}\hspace{-0.2em}\left(#1\right)}
\newcommand{\Caj}{{\calC}_J}
\newcommand{\Wej}{W}
\title{Barrier nonsubordinacy and absolutely continuous spectrum of block Jacobi matrices}
\author{Marcin Moszy\'{n}ski}
\address{
    Marcin Moszy\'{n}ski \\
    Faculty of Mathematics, Informatics and Mechanics \\
    University of Warsaw \\
    ul. Stefana Banacha 2 \\
    02-097 Warsaw, Poland
}
\email{mmoszyns@mimuw.edu.pl}
\author{Grzegorz \'{S}widerski}
\address{
    Grzegorz \'{S}widerski \\
    Institute of Mathematics \\ 
    Polish Academy of Sciences \\
    ul. Śniadeckich 8 \\
    00-696 Warsaw, Poland
}
\curraddr{Faculty of Pure and Applied Mathematics, Wroclaw University of Science and Technology, Wyb. Wyspiańskiego 27, 50-370 Wroclaw, Poland}
\email{grzegorz.swiderski@pwr.edu.pl}
\keywords{Block Jacobi matrix, absolutely continuous spectrum, matrix measures, Weyl function}
\subjclass[2020]{Primary 47B36}
\begin{document}
\selectlanguage{english}

\begin{abstract}
 We explore to what extent the relation between 
  the absolute continuous spectrum  and  non-existence of subordinate generalized eigenvectors, known for scalar Jacobi operators, can be formulated  also for block Jacobi operators with $d$-dimensional blocks. 
   The main object here  allowing  to make some progress  in that direction is the new notion of the barrier nonsubordinacy.
  We prove that the barrier nonsubordinacy implies the absolute continuity for block Jacobi operators.
   Finally, we extend  some well-known  $d=1$ conditions guaranteeing the absolute continuity to $d \geq 1$ and we give  applications of our results to some concrete classes of block Jacobi matrices.
\end{abstract}

\maketitle

\section{Introduction}
A \emph{block Jacobi matrix} is a semi-infinite block tridiagonal Hermitian matrix of the form
 \begin{equation} \label{eq:127'}
     \calJ =   
     \begin{pmatrix}
         B_0	& A_0	&		&       \\
         A_0^*	& B_1	& A_1	&         \\
             & A_1^*	& B_2 	& \ddots   \\
             &		&		\ddots &  \ddots 
     \end{pmatrix},
 \end{equation}
where $A_n$ and $B_n$  are ''blocks", i.e., $d \times d$ complex matrices with $A_n$ invertible and Hermitian $B_n$ for all $n \in \NN_0 = \{0,1,2,\ldots \}$. If $d=1$ we shall refer to $\calJ$ as \emph{the (scalar) Jacobi matrix}.
The action of $\calJ$ is well-defined on the linear space $\lnk{0}{\CC^d}$ of all $\CC^d$-valued sequences, and it is natural to define the operator $J$ as the restriction of $\calJ$ to the Hilbert space $\ldnz{\CC^d}$, where
\[
     \ldnz{\CC^d} = 
     \bigg\{ 
     	x \in \lnk{0}{\CC^d} : 
		\sum_{n=0}^{+\infty} \norm{x_n}_{\CC^d}^2 < \infty 
     \bigg\}.
\]
Namely, we define $J x = \calJ x$ for any $x \in \ldnz{\CC^d}$ such that also $\calJ x \in \ldnz{\CC^d}$. The operator $J$ is called \emph{the (maximal) block Jacobi operator}. It does not have to be self-adjoint. If it is, then one can define a matrix of Borel complex measures on the real line by the formula
\begin{equation} \label{eq:149'}
	M(G) = 
	\big( 
	\langle E_J(G) \delta_0(e_j), \delta_0(e_i) \rangle 
	\big)_{i,j=1 \ldots d} \qquad G \in \Bor(\RR),
\end{equation}
where $E_J$ is the projection-valued spectral measure of $J$ and for any $v \in \CC^d$ the sequence $\delta_0(v)$ is  equal to $v$ on $0$th position and $0$ elsewhere; by $\{e_1,\ldots,e_d\}$ we denote the standard orthonormal basis of $\CC^d$. Then $J$ turns out to be unitary equivalent to the operator acting by the multiplication by the identity function on the space $L^2(M)$ of classes of $\CC^d$-valued functions, see,  e.g., \cite{Weyl, Moszynski2022} for details. The well-known sufficient condition for the self-adjointness of $J$ is the \emph{Carleman condition}
\begin{equation} \label{eq:128'}
	\sum_{n=0}^{+\infty} \frac{1}{\norm{A_n}} = \infty,
\end{equation}
see e.g. \cite[Theorem VII-2.9]{Berezanskii1968}.

The interest in block Jacobi operators comes from mathematical physics and operator theory (see e.g. \cite{Aptekarev1984, Gorbachuk1997}), their close relation to the matrix moment problem (see e.g. \cite{Duran2001a, Berg2008, Krein1949a, Krein1949b}) as well as from the theory of matrix orthogonal polynomials on the real line, see e.g. \cite{Damanik2008, Sinap1995}, and Pad\'{e} approximations and continued fractions (see e.g. \cite{Beckermann1997, Zygmunt2002}). They are useful for analysis of difference equations of finite order, see \cite{Duran1995}. Some types of block Jacobi operators are related to random walks and level dependent quasi--birth--death processes, see e.g. \cite{Dette2007}. For further applications we refer to \cite{Sinap1996}.

Spectral analysis of block Jacobi operators for $d > 1$ is not well-developed yet.  In \cite{Sahbani2016} the Mourre's commutator method was applied to study compact perturbations of constant sequences $(A_n)_{n\geq0}$, $(B_n)_{n\geq0}$. Under some regularity hypotheses it was shown there that the singular continuous spectrum is empty. A commutator method was also applied in \cite{Janas2018, Janas2020} for obtaining a bound on the entries of the resolvent. In \cite{block2018}, by analysing Tur\'an determinants, the continuous spectrum of bounded and unbounded Jacobi operators was studied.
 Next, in \cite{Kupin2018}  the problem of localization of the essential spectrum of unbounded block Jacobi operators  was studied by estimating the quadratic form associated to $J$. Finally, in a recent article \cite{Oliveira2022}, some adaptations of scalar methods from \cite{Jitomirskaya1999, LastSimon1999} to the case $d \geq 1$ were established.

In this article we shall give sufficient conditions for absolute continuity\footnote{In this article the absolute continuity will be always understood with respect to the Lebesgue measure.} of the matrix measure $M$ on some regions of the real line. For doing so, we extend some parts of Pearson--Khan subordinacy theory for scalar Jacobi matrices (see \cite{Khan1992}). Subordinacy theory was initially formulated for $1$-dimensional Schr\"odinger operators on the half-line by Gilbert--Pearson in \cite{Gilbert1987} and later adapted to several classes of operators, see e.g. the excellent survey \cite{Gilbert2005}.

For  $\la\in\CC$ we say that a sequence $U = (U_n)_{n \in \NN_0}$ of $d \times d$ matrices is a \emph{matrix generalized eigenvector ( ``mgev'' for short) \  for $J$ and $\la$}, \ \ $U \in \MGEVze{\la}$, \ if it satisfies the recurrence relation
\begin{equation} \label{eq:129'}
	\la U_n = A_{n-1}^* U_{n-1} + B_n U_n + A_n U_{n+1}, \quad n \geq 1
\end{equation}
with some values $(U_0, U_1)$. For our purposes it will often be somewhat more useful to extrapolate sequences $U \in \MGEVze{\la}$ to $\NN_{-1}=\NN_0\cup\{-1\}$, in accordance with the common manner. So,  we 
define\footnote{By $\Id$ we denote the identity matrix of the appropriate dimension and also the identity operator in any liner space.}  $A_{-1} := \Id$ \ and require \eqref{eq:129'} also for $n=0$. Then  such  $U$ being already extrapolated mgev is called   {\em emgev} here,  and we can 
 consider the initial conditions $(U_{-1}, U_0)$  for it,  instead of $(U_{0}, U_1)$.
In what follows the two emgev-s $P(\lambda), Q(\lambda)$ corresponding to the initial conditions
\begin{equation} 
	\label{eq:140}
	\begin{cases}
		Q_{-1}(\lambda) = \Id, \\
		Q_0(\lambda) = 0
	\end{cases} \qquad
	\begin{cases}
		P_{-1}(\lambda) = 0, \\
		P_0(\lambda) = \Id
	\end{cases}
\end{equation}
will be instrumental. 

For any sequence $X=(X_k)_{k\geq-1}$ \ of $d \times d$ matrices  and any $n\in \Nz$ we define
\begin{equation} \label{eq:131'}
	\norm{X}_{[0,n]} = \sqrt{\sum_{k=0}^n \norm{X_k}^2},
\end{equation}
where we use the operator norm for matrices. 
\begin{definition}\label{denonsub}
We shall say that $J$ satisfies the \emph{\textbf{non}subordinacy condition} for some $\lambda \in \RR$,  if
for any two non-zero emgev-s  $U,V$ for $J$ and $\la$
\begin{equation} \label{eq:130'}
	\liminf_{n \to \infty} \frac{\norm{U}_{[0,n]}}{\norm{V}_{[0,n]}} < \infty. 
\end{equation}
\end{definition}
Recall that in the scalar case  a non-zero emgev  $V$ for $\la$ is \emph{subordinate}, if for each linearly independent emgev  $U$ for $\la$ the left-hand side of \eqref{eq:130'} is equal to infinity. The main result of Khan--Pearson in \cite{Khan1992} is their Theorem 3, which implies that
the set where $J$ satisfies nonsubordinacy condition is a minimal support of the absolutely continuous part of $M$, whereas 
the set where $P$ is subordinate is a minimal support of the singular part of $M$. The minimality, as well as the absolute continuity and the singularity are  understood here with respect to the Lebesgue measure.

In this article we extend some parts of \cite[Theorem 3]{Khan1992} to the setup of block Jacobi matrices with $d$-dimensional blocks. Note that such  generalizations to the case of dimension $d > 1$ encountered many serious problems, still unsolved. Here, to make some progress, we shall adapt some techniques of Jitomirskaya--Last (see \cite{Jitomirskaya1999}) to the present setup. 
For any $t > 0$ we can define $\norm{X}_{[0,t]}$ by an interpolation of \eqref{eq:131'}, see \eqref{eq:105a} for precise formulation. Then we can introduce a more quantitative version of Definition \ref{denonsub}. To do so we need the following definition.
\begin{definition} \label{def:barrier}
let $G \subset \RR$ be non-empty. We say that a function $\barrier : G \times [1, +\infty) \to \RR$ is a \emph{barrier} if for any $\la \in G$ and any two emgev   $U,V$ for $J$ and $\lambda$ normalized by 
\begin{equation} \label{eq:144}
	\norm{U_{-1}}^2 + \norm{U_{0}}^2 = \norm{V_{-1}}^2 + \norm{V_0}^2 = 1
\end{equation}
we have
\[
	\left(\frac{\norm{U}_{[0,t]}}{\norm{V}_{[0,t]}}\right)^2 \leq \barrier(\la, t), \quad t \geq 1.
\]
\end{definition}
Given a barrier $\barrier$ we say that $J$ is \emph{$\barrier$-nonsubordinate on $G$} if
\begin{equation} \label{eq:132}
	\liminf_{t \to \infty} \barrier(\la, t) < +\infty, \quad \la \in G.
\end{equation}
If this condition holds uniformly on $G$, namely   \ 
$    \sup_{\la \in G}
    \liminf_{t \to +\infty}
    \barrier(\la,t) < +\infty$, \ 
then we say that $J$ is \emph{uniformly $\barrier$-nonsubordinate} on $G$.

To the authors' knowledge the notion of a barrier has not appeared in the literature before, however, it was inspired by more restrictive Weidmann's notion of uniform nonsubordinacy from \cite[Section 3]{Weidmann1996} for $d=1$.

\begin{theorem} \label{thm:10-intr} (see Theorem~\ref{thm:10}) \ \
Assume that $J$ is self-adjoint,   $G \in \Bor(\RR)$ and $\barrier$ is a barrier for $J$ on $G$. If $J$ is $\barrier$-nonsubordinate on $G$, then
\begin{enumerate}[label=(\alph*)]
     \item $M$ defined in \eqref{eq:149'} is absolutely continuous on $G$. \label{thm:10-intr:a}
     \item there exists a density $D$ of $M$ on $G$ which is   an invertible matrix a.e. on $G$. \label{thm:10-intr:b}
     \item $J$ is absolutely continuous in $G$ and the  essential closure of $G$ is contained in $\sigmaAC(J)$.  \label{thm:10-intr:c}
\end{enumerate}
If, moreover, $J$ is uniformly $\barrier$-nonsubordinate on $G$, then there exist  $c_1, c_2>0$ such that the above density satisfies
\begin{equation} \label{eq:145}
     c_1 \Id \leq D(\la) \leq c_2 \Id
\end{equation}
for a.e.  $\la \in G$.
\end{theorem}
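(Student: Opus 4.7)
The plan is to derive Theorem~\ref{thm:10-intr} from the matrix Jitomirskaya--Last-type estimate of Theorem~\ref{thm:9-intr} combined with the theory of matrix-valued Herglotz--Nevanlinna functions. I would work with the matrix Weyl function $W\colon\CC^+\to\Mdc$ associated with $J$, defined by
\[
    W(z) = \bigl(\langle (J-z)^{-1}\delta_0(e_j),\,\delta_0(e_i)\rangle\bigr)_{i,j=1,\ldots,d},
\]
whose boundary behaviour recovers $M$: a Lebesgue-a.e.\ density of $\Mac$ is $\pi^{-1}\lim_{\epsilon\to 0^+}\Im W(\lambda+i\epsilon)$, and nontangential boundedness of $\Im W(\lambda+i\epsilon)$ along a sequence $\epsilon\to 0^+$ characterizes a minimal support of $\Mac$.

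First I would invoke the matrix Jitomirskaya--Last formula (Theorem~\ref{thm:9-intr}) to turn the barrier bound into an estimate on $W$. For $\lambda\in G$ and small $\epsilon>0$ one picks a scale $t=t(\lambda,\epsilon)$ implicitly from a distinguished element of $\MGEV{\lambda}$ (heuristically, so that $\epsilon\,\norm{\cdot}_{[0,t]}^2\asymp 1$ for that solution). Up to universal constants one then obtains
\[
    \norm{\Im W(\lambda+i\epsilon)} \leq C\sqrt{\barrier(\lambda,t(\lambda,\epsilon))}, \qquad \norm{(\Im W(\lambda+i\epsilon))^{-1}} \leq C\sqrt{\barrier(\lambda,t(\lambda,\epsilon))}.
\]
The upper bound on $\norm{\Im W}$ uses the barrier estimate on $\norm{U}_{[0,t]}/\norm{V}_{[0,t]}$ for a specific pair $U,V\in\MGEV{\lambda}$ with the normalization \eqref{eq:144}, while the upper bound on $\norm{(\Im W)^{-1}}$ (equivalently, the lower bound on the smallest eigenvalue of $\Im W$) is obtained by swapping the roles of $U$ and $V$. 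It is precisely this symmetry built into the definition of a barrier that produces the two-sided control.

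Now $\barrier$-nonsubordinacy \eqref{eq:132} gives, for each $\lambda\in G$, a sequence $\epsilon_k(\lambda)\to 0^+$ along which $\barrier(\lambda,t(\lambda,\epsilon_k))$ stays bounded. Hence both $\norm{\Im W(\lambda+i\epsilon_k)}$ and $\norm{(\Im W(\lambda+i\epsilon_k))^{-1}}$ are bounded. The set of $\lambda\in\RR$ on which $\Im W(\lambda+i\epsilon)$ is bounded along some sequence is a minimal support of $\Mac$, which yields \ref{thm:10-intr:a}; taking boundary values, the density $D$ inherits an a.e.\ lower bound on its smallest eigenvalue on $G$, so $D(\lambda)$ is invertible a.e., proving \ref{thm:10-intr:b}. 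Part \ref{thm:10-intr:c} then follows from the unitary equivalence of $J$ to multiplication by the identity on $L^2(M)$: absolute continuity of $M$ on $G$ makes $J$ absolutely continuous in $G$, while $G$ being contained in a minimal support of $\Mac$ together with $\sigmaAC(J)=\supp\Mac$ gives $\clleb{G}\subset\sigmaAC(J)$. In the uniformly $\barrier$-nonsubordinate case the above bounds are uniform in $\lambda\in G$, so the boundary limits produce constants $c_1,c_2>0$ as in \eqref{eq:145}.

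I expect the main obstacle to be the matrix Jitomirskaya--Last formula itself (Theorem~\ref{thm:9-intr}): in the scalar case the ratio-of-norms identity reduces to a short computation, whereas in the block case one must handle non-commutativity, choose a workable matrix analogue of the implicit scale $t(\lambda,\epsilon)$, and carefully exploit the symmetric normalization \eqref{eq:144} to extract two-sided control on $\Im W$ rather than merely an upper bound. Once this technical ingredient is in hand, the remaining proof is a routine combination of matrix Herglotz function theory with the $L^2(M)$-functional model for $J$.
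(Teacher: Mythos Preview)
Your overall strategy is exactly the paper's: deduce Theorem~\ref{thm:10-intr} from Theorem~\ref{thm:9-intr} together with the boundary theory of the matrix Herglotz function $W$. Two corrections are worth making. First, the bounds from Theorem~\ref{thm:9-intr} scale like $\barrier$, not $\sqrt{\barrier}$: one has $(8\barrier(\lambda,\ell_\lambda(\epsilon)))^{-1}\Id\le\Im W(\lambda+i\epsilon)$ and $\|W(\lambda+i\epsilon)\|\le 8d\,\barrier(\lambda,\ell_\lambda(\epsilon))$; this is harmless since only finiteness of $\liminf$ matters. Second, your route to \ref{thm:10-intr:a} is misstated: the set $\{\lambda:\liminf_{\epsilon\to0^+}\|\Im W(\lambda+i\epsilon)\|<\infty\}$ is \emph{not} a minimal support of $\Mac$ (it has full Lebesgue measure), and in any case lying in a minimal support of $\Mac$ would not exclude singular measure. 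What the upper bound actually buys is $G\subset\RR\setminus\Ssing$ where $\Ssing=\{\lambda:\Im\tr W(\lambda+i\epsilon)\to+\infty\}$ is a support of $\Msing$; hence $\Msing(G)=0$, which is \ref{thm:10-intr:a}. The lower bound then gives invertibility of $D$ on $G\cap L(W)$ (hence \ref{thm:10-intr:b} and $G\cap L(W)\subset\Sacd$), and \ref{thm:10-intr:c} follows via Proposition~\ref{cor:1}. With these adjustments your sketch matches the paper's proof.
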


We have to emphasize that Theorem~\ref{thm:10-intr}, unlike Khan--Pearson theory for  the $d=1$ case, gives only a sufficient condition for the absolute continuity of $M$. As we constructively show in Example~\ref{ex:2}, there exist block Jacobi operators such that for any bounded $G \subset \RR$ the properties: \ref{thm:10-intr:a}, \ref{thm:10-intr:b}, \ref{thm:10-intr:c} and \eqref{eq:145} are satisfied, but $J$ does not satisfy \eqref{eq:130'} for any $\la \in \RR$. Nevertheless, as we show in Theorem~\ref{thm:11}, we can use Theorem~\ref{thm:10-intr} to prove that some classes of Jacobi operators considered in \cite{block2018} are absolutely continuous, which was stated there as an open problem.

In Section~\ref{sec:6} we adapt to our setup some of the sufficient conditions implying non-existence of subordinate solutions, which proved to be useful in the case $d=1$. In particular, the one introduced by Last--Simon in \cite{LastSimon1999}. These conditions are formulated in terms of   transfer matrices. Recently, by adapting the approach of \cite{LastSimon1999}, an analogous conclusion for $d \geq 1$ was established in \cite[Theorem 1.2]{Oliveira2022} under the assumption that $A_n = A_n^*$ for any $n$ and
 \begin{equation} \label{eq:139}
     \sup_{n \geq 0} \big(\norm{A_n} + \norm{A_n^{-1}} \big) < \infty.
 \end{equation}
In contrast, we show in Theorem~\ref{thm:5} that \emph{Generalized Last--Simon} (GLS in short) condition implies the hypotheses of Theorem~\ref{thm:10-intr} for a suitably chosen barrier $\barrier$ (see \eqref{eq:120}), which is a different approach from the one used in \cite{Oliveira2022}. Let us stress that GLS condition is substantially stronger than the hypotheses of Theorem~\ref{thm:10-intr}. Indeed, as we show in Proposition~\ref{prop:20}, the Carleman condition \eqref{eq:128'} is necessary for GLS condition to hold.

Our approach to prove Theorem~\ref{thm:10-intr} is based on careful extension of the methods of Jitomirskaya--Last linking the asymptotic properties of generalized eigenvectors to the boundary values of the Cauchy transform of the matrix measure $M$. Along the way, we derive an analogue of their famous inequality \cite[Theorem 1.1]{Jitomirskaya1999}, see Theorem~\ref{thm:9}. Recently, Oliveira--Carvalho in \cite[Theorem 1.1]{Oliveira2022} also obtained an extension of Jitomirskaya--Last's inequality to $d \geq 1$ which is apparently too weak for proving our Theorem~\ref{thm:10-intr}.

The article is organized as follows. In Section~\ref{sec:2} we fix our notation and summarize  some fundamental results on block Jacobi matrices we need here. For some additional background with detailed proofs we refer to \cite{Weyl}. In Section~\ref{sec:5} we prove the central results: Theorem~\ref{thm:9}  --- the main result of the paper and its spectral consequence: Theorem~\ref{thm:10} (cf. Theorem~\ref{thm:10-intr}).  Section~\ref{sec:6} extends some well-known conditions implying nonsubordinacy from $d=1$ to the general case. In particular, it covers: GLS (Generalized Last--Simon) condition, GBS (Generalized Behncke--Stolz) condition and $H$ (homogenous) class condition. Finally, in Section~\ref{sec:7}, we show some examples and counterexamples illustrating the applicability of our results.

 \subsection*{Acknowledgment}
The article was partially supported by grant ``Subordinacy for block Jacobi operators. Spectral theory for self-adjoint finitely-cyclic
operators and the introduction to $L^2$ type matrix measure spaces'', NI 3B  POB III  IDUB (01/IDUB/2019/94),   funded by University of Warsaw, Poland.
 The  author Grzegorz Świderski was also partially supported by long term structural funding -- Methusalem grant of the Flemish Government. Part of this work was done while he was a postdoctoral fellow at KU Leuven.
 The author  Marcin Moszyński wishes to thank: Anna Moszyńska (IPEVP \& MusInvEv,  Warsaw) -- his wife --
 for  extraordinary patience and for valuable linguistic help; \  Nadia V. Zaleska (EIMI, St. Petersburg \& MusInvEv,  Warsaw) -- his friend --
  for some wise  hints and for invaluable moral support; \ 
 Grzegorz Świderski -- the co-author --
 for several years of   confidence in success and for the idea of {\em  barriers}.

\section{Preliminaries} \label{sec:2}

We collect and fix here some  general notation and terminology for the paper,  which will be important   in the main  sections.

\subsection{Some abbreviations and common symbols} \label{abb+symb}

We use  here also  some  ``more or less common'' {  abbreviations} and { symbols} like: \  iff \ (if and only if), \
TFCAE \ (the following conditions are [mutually] equivalent), \ w.r.t. \ (with respect to), \  s.a. \  (self-adjoint),  \ a.c. \ (absolutely continuous), \ 
 sing \ (singular), \  
a.e. \ (almost everywhere), \ 
JM, BJM \ (Jacobi matrix, block Jacobi matrix), \ JO, BJO\ (Jacobi operator, block Jacobi operator),  \ 
 $\lin Y$ \ (the linear subspace generated by a subset $Y$ of a linear space),  \ $\restr{F}{Y}$ \ (the restriction of  function $F$ to the subset $Y$ of the domain), \ 
$F(Y)$ \ (the image of  subset $Y$ with respect to function $F$), \   
$\Bor(\RR)$ \ (the Borel  $\si$-algebra of $\RR$), \   $\calB(X)$ \  (the space of bounded operators on a normed space $X$).

\subsection{Introductory notation and notions} \label{intr-not}

We   use here the following symbols for some sets of scalars: 
\[
    \CC_+:= \{ z \in \CC : \Im(z) > 0 \},\hspace{1.5em} 
    \RR_+ := \{ t \in \RR: \ t > 0 \},\hspace{1.5em}  
    \N_{k} := \{ n \in \ZZ: n\geq k \} \ \ \ \mbox{ for\ } k\in\ZZ,
\]
so, e.g.,  
$
    \N = \Nk{1}, \quad  
    \Nz = \N\cup\{0\}, \quad 
    \Nmj = \N\cup\{-1,0\}
$.
Let  $d\in\N$. By  \ 
$
e_1, \ldots,e_d
$
\
 denote the vectors of the standard base in $\CC^d$, and by \ $M_d(\CC)$ \  the space of all  $d \times d$ complex matrices,  with the usual matrix/operator norm.  We identify any $A\in M_d(\CC)$ with the    linear transformation of $\CC^d$ induced by matrix $A$ and any $v \in \CC^d$ with the column vector.
\emph{The real and imaginary parts} $\Re(A)$ and  $\Im(A)$ (in the adjoint, and not in the complex conjugation  sense)  are given by 
\begin{equation} \label{eq:150}
    \Re(A) : = \tfrac{1}{2} (A + A^*), \quad 
    \Im(A): = \tfrac{1}{2i} (A - A^*).
\end{equation}
For $v\in\CC^d$ we shall use the symbol $E^v$ to  denote the matrix determined  by 
\beql{def:Ev}
    E^v(e_j) = \opcje{v}{j=1}{0}{j>1,} \quad j=1,\ldots, d.
\eeq
The symbol $\col{A}{j}$ denotes  the   $j$th column of matrix $A$ (treated as a $\CC^d$-vector) and for vectors $v^{(1)},\dots,v^{(d)}\in\CC^d$  the matrix $A$ with  $\col{A}{j}=v^{(j)}$\ \ for any $j$ is denoted  by $[v^{(1)},\dots,v^{(d)}]$.

The symbols $\norm{\cdot}_X$ \ and  \  $\sprod{\cdot}{\cdot}_{X}$ denote here the norm in  a normed space $X$ and the scalar product in a Hilbert space $X$, respectively, but  we sometimes ``simplify'' the subscript $X$ for some $X$-s having a more complicated  symbol,  and  we  generally   omit it  for all operator norms,  which we use by default  for the bounded operators (mainly for matrices from $\Mdc$),  if no other choice is made. 
 For  a linear  operator $A : X \strz X$ in a normed space $X \neq \{0\}$  we define its \emph{minimum modulus} by
\begin{equation}
	\label{eq:23}
	\lnorm{A} := \inf_{\| x \|_{X} = 1} \| A x \|_X.
\end{equation}

We use  \ 
$  \cl{G}
$ \ 
for the closure of a subset $G$ of a topological space ($\sprz{{\la}}$ 
is   the complex conjugation of $\la\in\CC$),  and 
\ $\clleb{G}$ \ 
for the essential closure of  $G\in\Bor(\RR)$ w.r.t.    the Lebesgue measure $|\hspace{0.01em}\cdot\hspace{0.01em}|$ on  $\Bor(\RR)$, i.e., 
\ $\clleb{G}:=  
	\{ 
		t \in \RR : \forall_{\varepsilon > 0} \
		|G \cap (t-\varepsilon; t+ \varepsilon)| > 0
	\}
 $ (see e.g. \cite{Gesztesy2008}).  For linear space $X$,  by \  $\lnk{k}{X}$ \ we denote the linear space of {\bf all} the sequences $x=(x_n)_{n\in\Nk{k}}$ with terms in $X$.
If $X$ is a normed space, then 
\[
    \ldnz{X}:= 
    \bigg\{ x \in \lnz{X} : \sum_{n=0}^{+\infty} \| x_n \|_{X}^2 < \infty \bigg\}
\]
 is a normed space with the norm   \ $
    \norm{x}_{\el^2} := \sqrt{\sum_{n=0}^{+\infty} \| x_n \|_{X}^2}$, for $x\in\ldnz{X}$.
If $X$ is a Hilbert space, then $\ldnz{X}$ is a Hilbert space  with the scalar product given for $x, y\in\ldnz{X}$ by
\beql{l2scal}
	\langle x, y \rangle_{\el^2} := \sum_{n=0}^{+\infty} \langle x_n, y_n \rangle_{X}.
\eeq

Here, the most important  is the space $\ldnz{\CC^d}$ with 
the standard orthonormal basis
\[
    \big\{ \delta_n(e_i) \big\}_{(i,n) \in \{1,\ldots,d\} \times \Nz},
\]
where  for any vector $v \in \CC^d$ and $n\in\Nz$ we define the sequence $\delta_n(v) \in \lnk{0}{\CC^d}$ by
\[
    (\delta_n(v))_m :=
    \begin{cases}
	v & \text{if } m=n \\
	0 & \text{otherwise,}
    \end{cases} \qquad \qquad n,m\in\Nz.
\]

If  $\calH$ is  a Hilbert space and $A$ is s.a. in $\calH$ (including the unbounded case), then  the projection-valued spectral measure  of $A$  is  denoted by $E_A$, \  $E_A : \Bor(\RR) \strz \calB(\calH)$. 
If $x,y \in \calH$,  then $E_{A,x,y}$ is the complex measure  given by \ $
    E_{A,x,y}(\om) := \lesc E_A(\om) x, y \risc_{\calH}$ for $\om\in  \Bor(\RR)$
and $E_{A,x}:=E_{A,x,x}$.
Denote also \ $\calH_{\mathrm{ac}}(A) :=
	\{ x \in \calH : E_{A,x} \text{ is a.c. w.r.t.}  \ |\hspace{0.01em}\cdot\hspace{0.01em}|\}$. Here, for  $G \in \Bor(\RR)$, the key spectral notion  is:  
\emph{$A$ is absolutely continuous (a.c.) in $G$}, which means that  $\Ran E_A(G) \subset \calH_{\mathrm{ac}}(A)$.
And \ 
\emph{$A$ is absolutely continuous} \ \  iff \  $A$ is absolutely continuous in $\RR$.

\subsection{Block Jacobi operator. Finite cyclicity and the spectral matrix measure} \label{sec:3n}

We denote ``the size of the block'' for BJM by    $d$, $d\in\NN$, and for the whole  paper we assume that  $(A_n)_{n \in \Nz}$ and $(B_n)_{n \in \Nz}$ are  sequences of matrices from $M_d(\CC)$ such that 
\beql{zal-odw+sym}
    \det A_n \neq 0, \ \ \  
    B_n = B_n^*, \quad n \in \Nz.
\eeq
Define a \emph{block Jacobi matrix},  $\calJ$, by the formula~\eqref{eq:127'}.
The pair  $(A_n)_{n \in \NN_0}$, $(B_n)_{n \in \NN_0}$ is called \emph{Jacobi parameters of $\calJ$}, and 
 we treat $\calJ$ as the linear operator ({\em ``the formal block Jacobi operator''})  \  
$
    \calJ : \ell(\Nz, \CC^d)\strz \ell(\Nz, \CC^d) 
$,  \ 
such that   for any  $u\in\ell(\Nz, \CC^d)$
\beql{eq:defformal}
    \left(\calJ u\right)_n :=
    \opcje{ B_0 u_0 + A_0 u_{1}}{n=0}{	A_{n-1}^* u_{n-1} + B_n u_n + A_n u_{n+1}}{n\in\N,} 
\eeq
which gives
\beql{Jnadel}
    \calJ\left( \delta_n(v)\right) =
    \delta_{n-1}(A_{n-1}v)+\delta_{n}(B_{n}v)+\delta_{n+1}(A_{n}^*v), \qquad 
    v \in \CC^d,\  n\in\Nz, 
\eeq
where we additionally denote
$\delta_{-1}(v) := 0$ for any $v \in \CC^d$.
 Our main operator  is  
$J$ ({\em the maximal block Jacobi operator}), called here shortly {\em block Jacobi operator} (BJO),  is defined as  \  $J:=\restr{\calJ}{\Dom(J)}$, where  \ $\Dom(J) := \big\{ u\in\ldnz{\CC^d}: \calJ u\in\ldnz{\CC^d} \big\}$.
It is known, that (see,  e.g., \cite{Moszynski2022}, \cite{Weyl})
\begin{proposition} \label{prop-fincyc}
The system 
\beql{fix-fincyc}
    \vec{\varphi} := (\varphi_1, \ldots, \varphi_d), \quad
    \varphi_j := \de_{0}(e_j), \ \  j=1,\ldots,d\ \ \eeq
is a cyclic system for $J$.
\end{proposition}
Recall that the notion of {\em  cyclic system} (and of so-called finite-cyclicity) is a generalization of cyclic vector and the above assertion means just that  \ 
$
\lin\{J^n\varphi_j: \ n\in\Nz, \ j=1,\ldots,d\} $ \ 
is dense in $\ldnz{\CC^d}$.
Such a  choice of  cyclic system for $J$ is not unique, but  the particular  $\vec{\varphi}$ from  \eqref{fix-fincyc} is called {\em the canonical cyclic  system  for $J$}.  Similarly, instead of spectral (scalar) measure considered   for $J$ in the   $d=1$ case, for BJO  we  need an    abstract notion of {\em matrix measure}  and of  its particular choice for $J$ 
(see, e.g., \cite{Moszynski2022}, \cite{Weyl} or \cite[Section 8]{Weidmann1987}). 
\begin{definition}\label{smm}
\emph{The  spectral matrix measure $M=M_J= E_{J,\vec{\varphi}}$   \  for    $J$} is given by  
\begin{equation}
    \label{eq:IV:2.1}
    M: \Bor(\RR) \to \Mdc, \quad
    M(\om):= 
    \Big(E_{J,\varphi_j, \varphi_i}(\om)  \Big)_{i,j=1,\ldots,d}\in \Mdc  , \quad 
    \om \in \Bor(\RR). 
\end{equation}
 \end{definition}
We use rather $M$ and not   $M_J$ in this paper to denote the  spectral matrix measure   for    $J$,  when $J$ is fixed.

\subsection{The two associated difference equations and ``the solution extensions to -1''}
\label{sec:twoeq}

For any $z \in \CC$ let us consider  two 
difference equations tightly related to $\calJ$. The first --- ``the vector'' one --- is the infinite system of equations for a sequence $u=(u_n)_{n\in\Nz}\in\lnz{\CC^d}$:

\begin{equation} \label{eq:2}
    (\calJ u)_n = z u_n, \qquad n \geq 1.
\end{equation}
Each such a vector sequence $u$ is  called {\em generalized eigenvector (for $J$ and $z$)}\footnote{Note here, that it is ``generalized'' for two reasons; the first, because $u$ may not belong to $\Dom(J)$,  not even $\ldnz{\CC^d}$, and the second, since we do not require the equality  for $n=0$ above.} ---  {\em ``gev''} for short. By \eqref{eq:defformal} equivalently its  explicit form can be  written
\beql{eq:2'}
    A_{n-1}^* u_{n-1} + B_n u_n + A_n u_{n+1} = z u_n, \qquad n \geq 1.
\eeq
The second difference equation --- ``the matricial'' one --- is the analog equation for a matrix sequence  
$U = (U_n)_{n \in \Nz} \in \lnz{\Mdc}$:
\begin{equation} \label{eq:1}
    A_{n-1}^* U_{n-1} + B_n U_n + A_n U_{n+1} = z U_n, \qquad n \geq 1,
\end{equation}
and each such a matrix sequence $U$ is  called {\em matrix generalized eigenvector (for $J$ and $z$)} ---  {\em ``mgev''} for short. Having our BJM $\calJ$ fixed, for $z\in\CC$ we denote
\  $\GEVze{z} := \{u\in\lnz{\CC^d}:\, \mbox{$u$ is  a gev for $J$ and $z$}\}
$
and parallelly \ 
$\MGEVze{z} := \{ U \in \lnz{\Mdc}:\, \mbox{$U$ is  a  mgev for $J$ and $z$} \}
$, 
being obviously linear subspaces of  $\lnz{\CC^d}$ and $\lnz{\Mdc}$, respectively.
Denote also \ 
$
    \GEVld{z} := \GEVze{z} \cap \ldnz{\CC^d}
$.
It is often more convenient to use ``initial conditions at $-1$ and $0$'' instead of $0$ and $1$. To formulate this properly,  we shall define first the appropriate extension of each solution (in both, vector and matrix cases), which is tightly related to the  ``a priori choice'' of $A_{-1}$:
\beql{A-1}
    A_{-1} := -\Id.
\eeq
Taking into account \eqref{A-1}, let us consider ``extensions''  of the systems  \eqref{eq:2'} and \eqref{eq:1}:
\beql{eq:2'-1}
    A_{n-1}^* u_{n-1} + B_n u_n + A_n u_{n+1} = z u_n, \qquad n \geq {\pmb 0}
\eeq
for   sequences $u=(u_n)_{n \in \Nk{-1}} \in \lnk{-1}{\CC^d}$ and 
\beql{eq:1-1}
    A_{n-1}^* U_{n-1} + B_n U_n + A_n U_{n+1} = z U_n, \qquad n \geq {\pmb 0}
\eeq
for   sequences $U=(U_n)_{n \in \Nk{-1}} \in \lnk{-1}{\Mdc}$. 
Their solutions will be called {\em extended
 generalized eigenvectors} and  
{\em extended 
 matrix generalized eigenvectors}, respectively, {\em (for $J$ and $z$)} 
  ---  {\em ``egev''} and  {\em ``emgev''} for short. 
We denote also \ 
$  
    \GEV{z} := \big\{ u \in \lnk{-1}{\CC^d}:\, \mbox{$u$ is  an  egev for $J$ and $z$} \big\}
$ \ 
and  \ 
$
    \MGEV{z} := \big\{ U \in \lnk{-1}{\Mdc}:\, \mbox{$U$ is  an emgev for $J$ and $z$} \big\}.
$
For the matrix case, for any $z \in \CC$ choose
$Q(z), P(z) \in \MGEV{z}$  corresponding to \eqref{eq:140}
with the following general  notation:
for any  sequence $U(p) = ((U(p))_n)_{n \in \Nk{k}}$ depending on an extra ``function variable type parameter''  $p$: \ \ 
$
    U_n(p) := (U(p))_n
$ 
for any  $p$ and $n \in \Nk{k}$.

\subsection{Transfer matrices}
\label{subs-trans}

For  $z \in \CC$  the  generalized eigenequation \eqref{eq:2},  its matrix analog \eqref{eq:1},  as well as their extended to $-1$ variants, can be  written in  equivalent forms with the use of the so-called (one step) \emph{transfer matrices (for $J$ and $z$)}. The $n$th transfer matrix  $T_n(z) \in \Mddc$ has  the block form, with blocks in $\Mdc$: 
\begin{equation}
    \label{eq:21}
    T_n(z):=
    \begin{pmatrix}
        0 & \Id \\
        -A_{n}^{-1} A_{n-1}^* & A_n^{-1} (z \Id - B_n)
    \end{pmatrix},  \qquad n \geq 0 
\end{equation}
(for $n = 0$ recall  \eqref{A-1}).  So,  \eqref{eq:2'} \  (\eqref{eq:2'-1}) and also \eqref{eq:1} (\eqref{eq:1-1}) are  respectively equivalent to 
\begin{equation}
    \label{eq:20'}
    \begin{pmatrix}
        u_n \\
        u_{n+1}
    \end{pmatrix} 
    =
    T_n(z) 
    \begin{pmatrix}
        u_{n-1} \\
        u_n
    \end{pmatrix}, \quad \begin{pmatrix}
        U_n \\
        U_{n+1}
    \end{pmatrix} 
    =
    T_n(z) 
    \begin{pmatrix}
        U_{n-1} \\
        U_n
    \end{pmatrix},
     \qquad\quad n \geq 1  \ (\geq 0).
\end{equation} 
Define also  {\em the $n$-step transfer matrix} by 
\begin{equation} \label{eq:55}
    R_n(z) = T_{n-1}(z)\ldots T_0(z),  \qquad n\geq 1.
\end{equation}
This name is justified, e.g., by the property
\begin{equation} \label{eq:56}
    \begin{pmatrix}
        U_{n-1} \\
        U_{n}
    \end{pmatrix}
    =
    R_n(z) 
    \begin{pmatrix}
        U_{-1} \\
        U_0
    \end{pmatrix},
    \quad n \geq 1, 
\end{equation}
which we obtain  from \eqref{eq:20'}. Hence, by   \eqref{eq:56} and \eqref{eq:140} we get
\begin{equation} \label{eq:84}
    R_n(z) =
    R_n(z)\begin{pmatrix}
        \Id & 0 \\
        0 & \Id
    \end{pmatrix} = 
    \begin{pmatrix}
        Q_{n-1}(z) & P_{n-1}(z) \\
        Q_n(z) & P_n(z)
    \end{pmatrix}, \qquad n \geq 1.
\end{equation}

\subsection{$\ell^2$ matrix solutions and the matrix Weyl function  $W$} \label{sec:4} 

Similarly to the scalar Jacobi case,  Weyl coefficient (being  a matrix for  the block case), seems to be a natural candidate to be  the main object in any possible  method of  subordinacy, linking  generalized eigenvectors with  the absolutely continuous and with the singular part of the spectral measure. 
Observe first, that if $J$ is s.a., and $z \in \CC \setminus \RR$, then $z \notin \si(J)$, so denote:
\beql{ujz}
    u^{(j)}(z) := (J - z\Id)^{-1} \de_0(e_j), \ \ j=1,\ldots, d.
\eeq
In particular, for any $j$ we have $u^{(j)}(z) \in \Dom(J) \subset \ldnz{\CC^d}$ and 
\[
    \calJ u^{(j)}(z) = z u^{(j)}(z) + \de_0(e_j), \ \ j=1,\ldots, d.
\]
Considering the  terms $n \geq 1$ of the above equality of sequences we see that  each $u^{(j)}(z)$ is a gev for $J$ and $z$. Moreover, taking the term $n=0$ we get 
\beql{eq-for0}
    e_j = \left( (\calJ - z\Id) u^{(j)}(z) \right)_0 = 
    (B_0 - z \Id) u^{(j)}_0(z)+ A_0 u^{(j)}_1(z).
\eeq
Defining the matrix sequence with $u^{(j)}(z)$ as the sequence of its $j$th column, i.e., 
\beql{def-U(z)}
    \tilde{U}(z) := 
    [u^{(1)}(z), \ldots, u^{(d)}(z)] \in\ldnz{\Mdc}, 
\eeq
we obtain an mgev for $J$ and $z$. Now let $U(z) \in \ldnmj{\Mdc}$ be the  extension of $\tilde{U}(z)$ to an emgev for $J$ and $z$ (see,  e.g.,  \cite{Weyl} for some more  details of such extensions).
\begin{definition}
  We call $U(z)$ \  {\em the Weyl matrix  solution for $J$ and $z$}.  
\end{definition}
One can prove (see Proposition 5.3 in \cite{Weyl}) the following result being the generalization of the classical case $d=1$.

\begin{proposition}\label{prop-ltwosol-upr}
Let  $J$ be s.a. and $z \in \CC \setminus \RR$. There exists exactly one $\Wej(z)\in\Mdc$ such that  
\beql{eq-ltwo}
    P(z) \Wej(z) + Q(z) \in \ldnmj{\Mdc}.
\eeq
The above  unique $\Wej(z)$ satisfies: \    
$P(z) \Wej(z) + Q(z)$ \  is the Weyl  matrix  solution for $J$ and $z$, i.e., \  $ P(z) \Wej(z) + Q(z) = U(z)$; \  
$\Wej(z) = U_0(z)$ \ \ and \ 
 $\det \Wej(z) \neq 0$.
\end{proposition}

\begin{definition}\label{maWe}
Let $J$ be s.a. For fixed $z \in \CC \setminus \RR$\ 
the unique   $\Wej(z)$,  such that \eqref{eq-ltwo} holds is called {\em the matrix Weyl coefficient  (for $J$ and $z$)}. 
And the appropriate function 
$W : \CC \setminus \RR \strz \Mdc$ is called {\em the matrix Weyl function (for $J$)}.
\end{definition}
We omit here the dependence on $J$ in the notation, assuming that we consider a fixed $J$.

\subsection{The Cauchy transform of the spectral matrix measure and the matrix Weyl function}

Assume also here that  $J$ is s.a. \ 
The Cauchy transform  
of 
$M:=E_{J,\vec{\varphi}}$
(see \eqref{eq:IV:2.1}) is  defined as 
\beql{Cau-M}
  \Caj:\CC \setminus\RR\strz\Mdc, \qquad  \Caj(z): = \int_{\RR} \frac{1}{\la - z} \ud M(\la), \quad z \in \CC \setminus\RR,
\eeq
where this means that \ \ 
$\left(\Caj(z)\right)_{i,j}=
 \int_{\RR} \frac{1}{\la - z} \ud M_{i,j}(\la)$, 
with
$M_{i,j} = E_{J, \varphi_j, \varphi_i}$ \
and  $\varphi_j$  given by  \eqref{fix-fincyc}  for  $i,j=1,\ldots,d
$.
Hence, by spectral calculus for s.a. operators and by \eqref{ujz}, \eqref{l2scal} and \eqref{def-U(z)},   for \  $i,j=1,\ldots,d$ and $z \in \CC \setminus \RR$
\begin{align*}
    \left(\Caj(z)\right)_{i,j} 
    &= 
    \int_{\RR} \frac{1}{\la - z}\ud E_{J, \varphi_j, \varphi_i}  (\la)
= 
        \big\langle (J - z \Id)^{-1} \delta_0(e_j), \delta_0(e_i) \big\rangle_{\el^2}       \\
 &= 
    \big\langle u^{(j)}(z), \delta_0(e_i) \big\rangle_{\el^2} = 
    \big\langle (u^{(j)}(z))_0, e_i \big\rangle_{\CC^d} = 
    \left(U_0(z)\right)_{i,j},      \end{align*}
where  $U(z)$ is  Weyl matrix  solution for $J$ and $z$.
Finally, by Proposition~\ref{prop-ltwosol-upr}, we get
\begin{fact}\label{fact-WC}
If $J$ is  s.a.,   \ $z \in \CC \setminus \RR$, then \   \ 
$
\Wej(z) = \Caj(z) = 
    U_0(z) = 
    \Big(
        \big\langle (J - z \Id)^{-1} \delta_0(e_j), \delta_0(e_i) \big\rangle_{\el^2}         
    \Big)_{i,j=1,\ldots,d.}
    $
\end{fact}

\subsection{The boundary limits of the matrix Weyl function and spectral consequences for $J$} \label{sec:boundaryWeyl}

Assume  the s.a. of $J$, as before, and let us use the notation from the previous subsection, including  \  $M := E_{J,\vec{\varphi}}$.
Fact~\ref{fact-WC} \ implies that $W$  is  a holomorphic matrix-valued function. 
Moreover, $\Im W$ is a strictly positive matrix on $\CC_+$ by  \cite[Theorem 5.6]{Weyl}.  Thus, we see that the restriction of $W$ to $\CC_+$ is a matrix Herglotz function. Denote
\[
    \BLW := 
    \Big\{
        \la \in \RR: \lim_{\epsilon \to 0^+} \Wej(\la + i\epsilon) \  \mbox{exists}\footnote{As the  limit in $\Mdc$, i.e., in particular the limit must belong to $\Mdc$.}
    \Big\},
\]
and
\[
    \Wej(\la + i0) := 
    \lim_{\epsilon \to 0^+} \Wej(\la + i\epsilon), \quad 
    \la \in \BLW,
\]
and define the following sets:
\begin{align}
    \nonumber
    \Sac := 
    \bigcup_{r=1}^d \Sacr,\ \ \ \textrm{where}\ \ 
    \Sacr &:= 
    \big\{ \la \in \BLW : \rank \big( \Im \Wej(\la + i0) \big) = r \big\}, 
    \quad 1 \leq r \leq d, \\
    \label{Ssi}
    \Ssing &:= 
    \Big\{ 
        \la \in \RR : 
        \lim_{\epsilon \to 0^+} \Im \big( \tr \Wej(\la+i\epsilon) \big) = +\infty 
    \Big\}.
\end{align}
In particular it follows that  
\ $\Ssing \subset \RR \setminus \BLW$ \  and   \ \ 
$
    \Sac = \big\{ \la \in \BLW : \Im \Wej(\la + i0) \neq 0 \big\}$.

Let us recall now the crucial  result, joining   the above defined sets with  properties of \ $M_{\mathrm{ac}}$ \ and  \ $M_{\mathrm{sing}}$ \  --- the a.c. and the sing. parts  \ 
of the spectral matrix measure $M$ of $J$ w.r.t. \  $|\cdot|$ \   (see \cite[Fact A.6]{Weyl}).
This theorem is obtained just by the direct use of the abstract result 
\cite[Theorem 6.1]{Gesztesy2000} to the spectral  matrix measure $M$.
Define  $D: \BLW \strz\Mdc$  by 
\beql{eq:Mac:a}
    D(\la) := \frac{1}{\pi} \Im \Wej(\la + i0), \qquad \la \in \BLW. 
\eeq

\begin{theorem}\label{ThdensM}
\hspace{0.1 ex}
\begin{enumerate}[label=(\roman*)]
    \item \label{ThdensM:1}
    $\Ssing$  is a support of $\Msing$;
    
    \item \label{ThdensM:2}
    $|\RR\setminus \BLW|=0$;
    
    \item \label{ThdensM:3}
    $D$ is a density of $\Mac$ on $\BLW$ w.r.t. \ $|\cdot|$. 
\end{enumerate}
\end{theorem}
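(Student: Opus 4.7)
The plan is short because, as the excerpt announces, Theorem~\ref{ThdensM} is to be obtained by direct specialization of \cite[Theorem 6.1]{Gesztesy2000} to the matrix measure $M$. So the task reduces to (a) verifying the hypotheses of that abstract boundary-value theorem and (b) translating its conclusions into the three claims above.

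First I would reduce everything to a Herglotz/Nevanlinna-type setup. By Fact~\ref{fact-WC}, the matrix Weyl function $W$ coincides on $\CC\setminus\RR$ with the Cauchy transform $\Caj$ of the finite positive $\Mdc$-valued Borel measure $M$ (finiteness follows from $M(\RR)=\Id$, which in turn follows from \eqref{eq:IV:2.1} and $E_J(\RR)=\Id$). The calculation \eqref{eq:11} shows that $\Im W(z)\ge 0$ on $\CC_+$, so $\restr{W}{\CC_+}$ is a matrix-valued Herglotz function whose representing measure, by the uniqueness of the Nevanlinna representation for Herglotz functions with a finite representing measure, is exactly $M$. This is precisely the setting of \cite[Theorem 6.1]{Gesztesy2000}.

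Next I would read off the three claims. The cited theorem asserts that for a.e.\ $\la\in\RR$ (with respect to Lebesgue measure) the normal boundary limit $W(\la+i0)=\lim_{\eps\to 0^+}W(\la+i\eps)$ exists in $\Mdc$; this immediately gives~\ref{ThdensM:2}. The same theorem identifies the Radon--Nikodym derivative of the absolutely continuous part of the representing measure with respect to Lebesgue measure as $\frac{1}{\pi}\Im W(\cdot+i0)$; applied to $M$, this is precisely the density $D$ of $\Mac$ on $\BLW$ asserted in~\ref{ThdensM:3}. For~\ref{ThdensM:1}, the abstract theorem characterizes a support of the singular part of the representing measure as the set where the imaginary part of the (trace of the) Herglotz function diverges radially; substituting $M$ and using $\tr\Im W(\la+i\eps)=\Im\tr W(\la+i\eps)$ yields exactly the set $\Ssing$ defined in \eqref{Ssi}, which is then a support of $\Msing$. (Here one also uses \eqref{Ssi'}, which was recorded right after the definitions and is an automatic consequence: divergence of $\Im\tr W$ precludes existence of a finite boundary limit.)

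The only genuine content beyond quoting \cite[Theorem 6.1]{Gesztesy2000} is checking that the abstract ``support of the singular part'' statement matches \eqref{Ssi}. In the scalar case this is classical (Aronszajn--Donoghue); in the matrix case it follows by applying the scalar statement to the trace measure $\tr_M$, since $\tr\Wej$ is a scalar Herglotz function whose representing measure is $\tr_M$, and $(\tr_M)_{\mathrm{sing}}=\tr(\Msing)$ by Fact~\ref{acscM}. I anticipate the only mildly delicate point is spelling out this last equivalence and ensuring that ``support'' is understood in the (weak) sense used in Appendix~\ref{vecmacmeas}; no further work with block Jacobi structure is required, since all analytic content is carried by the Herglotz property of $W$ already established in \eqref{eq:11}.
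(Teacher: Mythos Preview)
Your proposal is correct and matches the paper's approach exactly: the paper does not give an independent proof of Theorem~\ref{ThdensM} but simply states that it ``is obtained just as the direct use of the abstract result \cite[Theorem 6.1]{Gesztesy2000} to the spectral matrix measure $M$.'' Your write-up supplies precisely the verification of hypotheses and translation of conclusions that this citation leaves implicit, so there is nothing to add or correct.
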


So, this result shows that controlling the boundary limits of the matrix Weyl function allows to get a lot of detailed information about the spectral matrix measure of $J$ and  of its sing. and a.c. parts. 
Theorem~\ref{ThdensM} can be ``translated'' into the   operator spectral language in the form as follows (see \cite{Weyl} subsection 5.4 for the proof).

\begin{proposition}
\label{cor:1}
Suppose that $J$ is s.a. and $G \in \Bor(\RR)$.
\begin{enumerate}[label=(\roman*)]
    \item \label{cor:1:1}
    If $G \subset \RR \setminus \Ssing$, then $J$ is absolutely continuous in $G$.
    
    \item \label{cor:1:2}
    If $G \subset \Sac\cup(\RR\setminus(\BLW\cup\Ssing))$, then $J$ is absolutely continuous in $G$ and \ $\clleb{G} \subset \sigmaAC(J)$. So, if moreover $G$ is open, or if $G$ is a sum of an arbitrary family of connected non-singletons in $\RR$, then  $\cl{G} \subset \sigmaAC(J)$.
\end{enumerate}
\end{proposition}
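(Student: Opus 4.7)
The plan is to deduce both parts from three ingredients: the unitary equivalence of $J$ to multiplication by $\xx$ on $L^2(M)$ given by Theorem~\ref{unit-rep-bl}; the fact from Proposition~\ref{thm:1} that $\Sac$ is a minimal support of $(\tr_M)_{\mathrm{ac}}$ and $\Ssing$ is a Lebesgue-null support of $(\tr_M)_{\mathrm{sing}}$; and the general principle that for every $x\in\ldnz{\CC^d}$ the scalar spectral measure $E_{J,x}$ is absolutely continuous with respect to $\tr_M$. This principle itself is an immediate consequence of Theorem~\ref{unit-rep-bl}: identifying $x$ with some $f\in L^2(M)$, the spectral measure of $f$ under multiplication by $\xx$ is $B\mapsto\int_B \langle f,(\ud M) f\rangle_{\CC^d}$, and any Borel $B$ with $\tr M(B)=0$ forces $M(B)=0$ (because $M(B)\geq 0$), so $E_{J,x}(B)=0$. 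Consequently, in the Lebesgue decomposition $E_{J,x}=E_{J,x,\mathrm{ac}}+E_{J,x,\mathrm{sing}}$ the singular part is absolutely continuous with respect to $(\tr_M)_{\mathrm{sing}}$ and is therefore concentrated on $\Ssing$.

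For part~(i), fix $x\in\calH_G(J)=\Ran E_J(G)$; then $E_{J,x}$ is concentrated on $G$. Combined with the preceding observation, $E_{J,x,\mathrm{sing}}$ is concentrated on $G\cap\Ssing=\emptyset$, whence $E_{J,x,\mathrm{sing}}=0$ and $x\in\calH_{\mathrm{ac}}(J)$. This gives $\calH_G(J)\subset\calH_{\mathrm{ac}}(J)$, which is the absolute continuity of $J$ in $G$.

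For part~(ii), note first that $\Sac\subset\BLW$ by \eqref{Sac'} while $\Ssing\cap\BLW=\emptyset$ by \eqref{Ssi'}, hence $\Sac\cap\Ssing=\emptyset$; therefore $G\subset\Sac\cup(\RR\setminus(\BLW\cup\Ssing))\subset\RR\setminus\Ssing$, and part~(i) already yields the absolute continuity of $J$ in $G$. Next, Theorem~\ref{ThdensM}(ii) together with $|\Ssing|=0$ gives $|N|=0$ for $N:=\RR\setminus(\BLW\cup\Ssing)$; since $G\subset\Sac\cup N$, the definition \eqref{clos-leb} immediately yields $\clleb{G}\subset\clleb{\Sac\cup N}=\clleb{\Sac}$. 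By Proposition~\ref{thm:1} and the absolute continuity of $(\tr_M)_{\mathrm{ac}}$ with respect to Lebesgue, the topological support of $(\tr_M)_{\mathrm{ac}}$ equals $\clleb{\Sac}$, and via Theorem~\ref{unit-rep-bl} this topological support coincides with $\sigmaAC(J)$; hence $\clleb{G}\subset\sigmaAC(J)$. Finally, when $G$ is open, or a union of connected non-singletons, every point of $G$ lies in $\clleb{G}$ because every open interval of positive length has positive Lebesgue measure; and since $\clleb{G}$ is closed (it is the topological support of the restriction of Lebesgue measure to $G$), we conclude $\cl{G}\subset\clleb{G}\subset\sigmaAC(J)$. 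The delicate step in this program is the clean identification $\sigmaAC(J)=\clleb{\Sac}$, which relies on transporting spectral objects between $J$ and multiplication by $\xx$ on $L^2(M)$ carefully in the finitely cyclic setting.
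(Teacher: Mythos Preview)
Your proof is correct and follows essentially the same route as the paper: both arguments rest on Proposition~\ref{thm:1} (that $\Ssing$ is a Lebesgue-null support of $(\tr_M)_{\mathrm{sing}}$ and $\Sac$ is a minimal support of $(\tr_M)_{\mathrm{ac}}$) together with the finitely-cyclic unitary model of Theorem~\ref{unit-rep-bl}. The only difference is packaging: the paper outsources the abstract spectral-theoretic steps (that $(\tr_M)_{\mathrm{sing}}(G)=0$ forces absolute continuity in $G$, and that a minimal support of $(\tr_M)_{\mathrm{ac}}$ has Lebesgue closure contained in $\sigmaAC(J)$) to \cite[Theorem~C.2]{Moszynski2022}, whereas you rederive them directly from $E_{J,x}\ll\tr_M$ and the identification $\sigmaAC(J)=\clleb{\Sac}$ --- the step you rightly flag as the delicate one.
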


\subsection{Some analogs of ``J--L continuous interpolation''  
of a discrete  family of semi-norms} \label{J-Lsemi}

The main idea of  the Jitomirskaya--Last's new approach in   \cite{Jitomirskaya1999}  was ``technically'' based  on a continuous interpolation
of the discrete  family of semi-norms $\{\norm{\cdot}_{n}\}_{n \in \Nz}$ in $\lnz{\CC}$ to the family  $\{\norm{\cdot}_{t}\}_{t \in [0, +\infty)}$. Here we extend this notion in two ways. Let $V$ be a normed space,  $n_0 \in \ZZ$ and   $X \in \ell(\Nk{ n_0}, V)$.
For any $(n_1,t) \in \ZZ \times \RR$ such that $n_0 \leq n_1 \leq t$, we define 
\begin{equation} \label{eq:105a}
    \norm{X}_{[n_1,t]}:= 
    \bigg( 
    \sum_{k=n_1}^{\lfloor t \rfloor} 
        \norm{X_k}^2_V + \{t\} \! \norm{X_{\lfloor t \rfloor + 1}}^2_V 
    \bigg)^{1/2},
\end{equation}
where $\lfloor t \rfloor$ and $\{t\}$ are the integer  and the fractional part of $t$, respectively, and for $t=\infty$
\begin{equation} \label{eq:105b}
    \norm{X}_{[n_1,\infty]} := 
    \bigg( 
    \sum_{k=n_1}^{+\infty} \norm{X_k}^2_V
    \bigg)^{1/2}
\end{equation}
(which can  possibly be $+\infty$).
Moreover, if $V= \Mdc$ for some $d \geq 1$, then similarly to \eqref{eq:105a} we denote 
\begin{equation} \label{eq:105c}
    \lnorm{X}_{[n_1,t]}:= 
    \bigg( 
    \sum_{k=n_1}^{\lfloor t \rfloor} 
        \lnorm{X_k}^2 + \{t\} \!\! \lnorm{X_{\lfloor t \rfloor + 1}}^2 
    \bigg)^{1/2}.
\end{equation}

\section{A barrier approach to uniform estimates of matrix Weyl function} \label{sec:5}

This is the main part of the article. It contains the  main new ideas allowing to get some analogies of  the 1-dimensional subordinacy results also in the  $d$-block case. The most important new idea seems to be the usage of barriers, which allow us to control generalized eigenvectors uniformly with respect to initial conditions. Moreover, one of technical tools used  in our proof of Theorem~\ref{thm:9} is a generalisation of the Jitomirskaya--Last's idea from \cite{Jitomirskaya1999}.

\subsection{The nonsubordinacy} \label{sec:m-v-nonsubodinacy'}

The first notion worth mentioning before we focus on  the barrier nonsubordinacy is the simpler notion of nonsubordinacy, defined in  Introduction (see Definition~\ref{denonsub}). 

Note that in \eqref{eq:130'}  we could choose ``liminf over $t>0$'', instead of the more  original Khan and Pearson's like ``liminf over $n\in\NN$'',  and such choice would be equivalent. See \cite[subsection 6.1]{Weyl}  for the proof of this fact and also for  the remaining proofs for  the present small subsection,  and for other details related to the nonsubordinacy notion. 
Let us mention here also  that the similar notion of the so called {\em vector nonsubordinacy} can be defined as well just by considering  egev-s instead of emgev-s and with some  similar seminorms in the definition. And, surprisingly, this is   equivalent to the nonsubordinacy. The  new spectral result below is ''not so far from the absolute continuity of $J$''.

\begin{theorem} \label{prop:16}
Suppose that  $J$ is s.a. and it satisfies nonsubordinacy condition for some $\la \in \RR$. Then \ $\dim (\GEVld{\la}) = 0$ \ and \ $\la \in \sigma(J) \setminus \sigmaP(J)$.
\end{theorem}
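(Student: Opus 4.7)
The overall plan is to prove $\dim(\GEVld{\la})=0$ first and then deduce $\la \in \sigma(J) \setminus \sigmaP(J)$ as a short corollary. I would argue the first assertion by contradiction: fix a non-zero $u \in \GEVld{\la}$ with extension $\tilde u := \extl{u}\in\GEV{\la}$ (Fact~\ref{fact:ext}); since $\restr{\tilde u}{\Nz}\in\ldnz{\CC^d}$, the semi-norm $\norm{\tilde u}_{[0,t]}$ stays bounded as $t \to +\infty$. The goal is to produce a non-zero $v \in \GEV{\la}$ with $\norm{v}_{[0,t]} \to +\infty$, yielding $\liminf_{t \to +\infty} \norm{v}_{[0,t]}/\norm{\tilde u}_{[0,t]}=+\infty$ and thus contradicting vector nonsubordinacy applied to the pair $(v,\tilde u)$.

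If $\dim\GEVld{\la} < 2d$, the construction is immediate: since $\dim \GEVze{\la}=2d$ (Fact~\ref{fact:ini-iso}) and $\extl{}$ is an isomorphism of $\GEVze{\la}$ onto $\GEV{\la}$, there exists $v \in \GEV{\la}$ with $\restr{v}{\Nz}\notin\ldnz{\CC^d}$, and $\norm{v}_{[0,t]}^2$ diverges just from the definition of the interpolated semi-norm in \eqref{eq:105a}.

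The main obstacle is the remaining possibility $\dim\GEVld{\la}=2d$, i.e.\ \emph{every} generalized eigenvector is square-summable; here no growth comparison can deliver a contradiction directly, and one has to rely on a spectral argument using self-adjointness of $J$. I would introduce the linear map $\Phi : \GEVld{\la} \to \CC^d$ given by $\Phi(w) := B_0 w_0 + A_0 w_1 - \la w_0$, so that $\calJ w - \la w = \delta_0(\Phi(w))$; its kernel is exactly $\EV{\la}$, because $\Phi(w)=0$ together with $w \in \GEVld{\la} \subset \Dom(J)$ gives $J w = \la w$. Rank--nullity forces $\dim \EV{\la} \geq 2d - d = d$. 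On the other hand the canonical cyclic system $\vec\varphi$ has length $d$ (Proposition~\ref{prop-fincyc}), and the unitary equivalence of Theorem~\ref{unit-rep-bl} with multiplication by $\xx$ on $L^2(M)$ bounds every eigenspace dimension by $d$; hence $\dim \EV{\la} = d$ and $\Phi$ is surjective. Picking $u^{(j)} \in \GEVld{\la}$ with $\Phi(u^{(j)})=e_j$, one gets $(J-\la) u^{(j)} = \varphi_j$, so $\varphi_j \in \Ran(J-\la) \subset \overline{\Ran(J-\la)} = \Ker(J-\la)^\perp = \EV{\la}^\perp$ by self-adjointness of $J-\la$. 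But cyclicity of $\vec\varphi$ together with the $J$-invariance of $\EV{\la}$ forces $\EV{\la}$ to coincide with the (finite, hence closed) linear span of the projections $\{P_{\EV{\la}} \varphi_j\}_{j=1}^d$: approximating any $w\in\EV{\la}$ by combinations of $J^n\varphi_j$ and projecting gives $w = \lim_k \sum_{n,j} c^k_{n,j} \la^n P_{\EV{\la}}\varphi_j$. Since $\dim \EV{\la} = d \geq 1$, at least one projection is non-zero, contradicting $\varphi_j \perp \EV{\la}$.

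Having established $\dim\GEVld{\la}=0$, the inclusion $\EV{\la} \subset \GEVld{\la}$ immediately yields $\la \notin \sigmaP(J)$. Finally, if $\la$ were outside $\sigma(J)$ entirely, then Proposition~\ref{duzo-malo-wl2} would give $\dim \GEVld{\la}=d \geq 1$, contradicting what was just proven; therefore $\la \in \sigma(J)$, finishing the argument.
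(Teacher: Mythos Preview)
Your proof is correct, but the paper takes a shorter route. The paper's argument runs: by Fact~\ref{factsym}, nonsubordinacy gives a sequence $t_k\to+\infty$ with $\norm{u}_{[0,t_k]}\geq c\norm{v}_{[0,t_k]}$ for any non-zero pair $u,v\in\GEV{\la}$; passing to $t=+\infty$ shows that if \emph{one} non-zero $u$ is square-summable then \emph{all} $v$ are. At that point the paper simply cites \cite[Theorem~1.3]{Dyukarev2020}, which asserts that for a block Jacobi matrix this ``limit-circle'' situation (all generalized eigenvectors in $\ell^2$ for some real $\la$) is incompatible with self-adjointness of $J$. The last step $\la\in\sigma(J)\setminus\sigmaP(J)$ then follows from \eqref{EVeq} and Proposition~\ref{duzo-malo-wl2}, exactly as in your argument.

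So the two proofs coincide except in how they exclude the case $\dim\GEVld{\la}=2d$. The paper outsources it to Dyukarev's deficiency-index criterion; your argument is self-contained: rank--nullity on $\Phi$ plus the multiplicity bound from Theorem~\ref{unit-rep-bl} forces $\dim\EV{\la}=d$ and surjectivity of $\Phi$, whence $\varphi_j\in\Ran(J-\la)\subset\EV{\la}^\perp$; since $\EV{\la}=E_J(\{\la\})\calH$ is reducing, $J^n\varphi_j\in\EV{\la}^\perp$ for all $n,j$, so the cyclic span (dense by Proposition~\ref{prop-fincyc}) lies in $\EV{\la}^\perp$, giving $\EV{\la}=\{0\}$---a contradiction. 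Your approach trades brevity for independence from the external limit-circle theorem and in effect reproves the relevant special case of it using the finite-cyclic structure already established in the paper.
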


\subsection{Barriers and barrier nonsubordinacy}

In what follows we need to make the concept of  nonsubordinacy more controllable than in Definition \ref{denonsub}. More precisely, our  goal is to  control two things:
\begin{itemize}
    \item  a bound on the ratio \ $\frac{\norm{U}_{[0,t]}}{\norm{V}_{[0,t]}}$ \  for any fixed ``large'' $t$ and  $\la\in G$, but joint for all such $U,V \in \MGEV{ \la}$ which are ``normalized'' in a  proper sense  --- it is a  key requirement;
    \item the size of the above bound as a function of $t$. 
\end{itemize}
This explains the sense of the notions of barrier and of barrier nonsubordinacy  defined  in Introduction --- see   Definition~\ref{def:barrier}. 
By the homogeneity of all the norms and semi-norms, and  by the use of the ``symmetry w.r.t. $U$ and $V$'' we easily obtain:
\begin{remark} \label{rem:1}
Let $\emptyset \neq G \subset \RR$ and $\barrier : G \times [1, \infty) \strz \RR$ be a barrier. Then
\begin{enumerate}[label=(\roman*)]
    \item \label{eq:107':1}
    $\barrier(\la, t) \geq 1$ \ for any $\la \in G$, $t \geq 1$.
        
    \item \label{eq:107':c}
    for any $\lambda \in G$, any non-zero $U,V \in \MGEV{\lambda}$ and any $t \geq 1$
    \[ 
        \frac{1}{\barrier(\la, t)} 
        \frac{\norm{U_{-1}}^2 + \norm{U_{0}}^2}{\norm{V_{-1}}^2 + \norm{V_{0}}^2} 
        \leq
        \frac{\norm{U}^2_{[0,t]}}{\norm{V}^2_{[0,t]}} 
        \leq
        \barrier(\la, t)
        \frac{\norm{U_{-1}}^2 + \norm{U_{0}}^2}{\norm{V_{-1}}^2 + \norm{V_{0}}^2}.
    \]
\end{enumerate}
\end{remark}
Hence, using ``liminf over $t>0$'' for nonsubordinacy (see subsection \ref{sec:m-v-nonsubodinacy'}), we get the expected result.   
\begin{remark}
\label{barvecmatrnonsub}
 If $G \subset \RR$,  $\barrier$ is  a barrier on $G$ for $J$ and   {$J$ is $\barrier$-nonsubordinate} on $G$, then $J$ satisfies  nonsubordinacy condition  for any $\la\in G$.  
\end{remark}

A natural problem now is the existence of barriers for fixed $J$ and $G$. And  ''an abstract'' solution of it is simple: 
 for any $J$ there  exists even its  smallest ``universal'' barrier (on $\RR$, so ''on each $G$'') --- we call it 
 {\em the minimal barrier for $J$}.
 For  $\la\in \CC$  \     denote
 $\NMGEV{\la}:= \{ U \in \MGEV{\la}: \ (U_{-1}, U_0) \in \sS \}$, where \ 
 $\sS = 
 \big\{ (X, X') \in \Mdc \times \Mdc: \ \norm{X}^2 + \norm{X'}^2 = 1 \big\}
$, \ 
and consider the  function \ \  $\barrierMIN : \RR \times [1, +\infty) \to [1, +\infty]$ \ \  given by the formula \beql{barmin}
     \barrierMIN(\la, t) :=
    \sup_{U,V \in \NMGEV{\la}}
     \left(\frac{\norm{U}_{[0,t]}}{\norm{V}_{[0,t]}}\right)^2, \qquad \la\in\RR, \  t\geq 1.
 \eeq
Now, using the compactness of $\sS \times \sS$ and the standard  Weierstrass theorem arguments, we easily get:
 \begin{proposition} \label{prop:19}
\ \   $\barrierMIN$ has  finite values and   
 is a barrier for $J$ on $\RR$. Moreover $\restr{\barrierMIN}{G\times [1, +\infty)}$ is the smallest barrier for $J$ on any $G \subset \RR$, i.e., $\barrierMIN(\la,t)\leq\barrier(\la,t)$  for any barrier $\barrier$ for $J$ on $G$, 
$\la\in G, \  t\geq 1$.
 \end{proposition}

Our next goal is now the construction of  a ''more computable''  barrier. 
We shall do this in terms of the sequence of  $n$-step transfer matrices, i.e.,  
$$
R(\la)=    (R_{n}(\la))_{n\in\N}.
$$

To do this, we need  first:

\begin{proposition}
Let $\la \in \RR$.
If $u \in \GEV{ \la}$, then for any $t \geq 1$
\begin{equation} \label{eq:87}
    \frac{1}{2}
    \big( \norm{u_{-1}}_{\CC^d}^2 +\norm{u_0}_{\CC^d}^2 \big)
    \lnorm{R(\la)}_{[1,t]}^2
    \leq
    \norm{u}_{[0,t]}^2 
    \leq
    \big( \norm{u_{-1}}_{\CC^d}^2 +\norm{u_0}_{\CC^d}^2 \big)
    \norm{R(\la)}_{[1,t]}^2.
\end{equation}
If $U \in \MGEV{ \la}$, then for any $t \geq 1$
\begin{equation} \label{eq:88}
    \frac{1}{4}
    \big( \norm{U_{-1}}^2 +\norm{U_0}^2 \big)
    \lnorm{R(\la)}_{[1,t]}^2
    \leq
    \norm{U}_{[0,t]}^2 
    \leq
    2
    \big( \norm{U_{-1}}^2 +\norm{U_0}^2 \big)
    \norm{R(\la)}_{[1,t]}^2.
\end{equation}
In particular, for any $U,V \in \NMGEV{\la}$
\begin{equation} \label{eq:88'}
\left(\frac{\norm{U}_{[0,t]}}{\norm{V}_{[0,t]}}\right)^2 
\leq 8
\left(\frac{\norm{R(\la)}_{[1,t]}}
{\lnorm{R(\la)}_{[1,t]} }\right)^2, 
\qquad t\geq 1.
\end{equation}
\end{proposition}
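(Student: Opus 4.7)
The core idea is to exploit the identity \eqref{eq:56}, which says $\bigl(\begin{smallmatrix}U_{n-1}\\ U_n\end{smallmatrix}\bigr) = R_n(\la)\bigl(\begin{smallmatrix}U_{-1}\\ U_0\end{smallmatrix}\bigr)$ for $n\geq 1$, together with the elementary ``column-block'' matrix-norm inequalities
\[
   \tfrac{1}{2}\bigl(\norm{X}^2+\norm{Y}^2\bigr)\ \leq\ \max\bigl(\norm{X}^2,\norm{Y}^2\bigr)\ \leq\ \bigl\|\bigl(\begin{smallmatrix}X\\ Y\end{smallmatrix}\bigr)\bigr\|^{2}\ \leq\ \norm{X}^{2}+\norm{Y}^{2},
\]
applied once to $(U_{-1},U_0)^{T}$ and once to $(U_{n-1},U_n)^{T}$. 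Combining these with $\lnorm{R_n(\la)}^2\|\cdot\|^2\leq\|R_n(\la)\cdot\|^2\leq\|R_n(\la)\|^2\|\cdot\|^2$ yields the pointwise two-sided estimate
\[
   \tfrac{1}{2}C\,\lnorm{R_n(\la)}^{2}\ \leq\ \norm{U_{n-1}}^{2}+\norm{U_n}^{2}\ \leq\ C\,\norm{R_n(\la)}^{2},\qquad n\geq 1,
\]
where $C:=\norm{U_{-1}}^{2}+\norm{U_0}^{2}$; the vector case is identical with factor $1$ in place of $1/2$ since $\|(u_{-1},u_0)^T\|^2=\|u_{-1}\|^2+\|u_0\|^2$.

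Next I plan to sum from $n=1$ to an integer $N\geq 1$ and exploit the identity
\[
   \sum_{n=1}^{N}\bigl(\norm{U_{n-1}}^{2}+\norm{U_n}^{2}\bigr) = \norm{U_0}^{2}+2\!\!\sum_{n=1}^{N-1}\norm{U_n}^{2}+\norm{U_N}^{2},
\]
which is sandwiched between $\norm{U}_{[0,N]}^{2}$ and $2\norm{U}_{[0,N]}^{2}$. This immediately gives the lower matrix bound $\tfrac{1}{4}C\,\lnorm{R(\la)}_{[1,N]}^{2}\leq\norm{U}_{[0,N]}^{2}$. For the upper matrix bound the issue is that we also need to control $\norm{U_0}^2$ (and the ``missing'' $k=0$ term in $\sum_{k=1}^N\|R_k\|^2$); the clean fix is to observe that $R_1(\la)=T_0(\la)$ sends $(0,v)^{T}$ to $(v,A_0^{-1}(\la\Id-B_0)v)^{T}$, hence $\norm{R_1(\la)}^{2}\geq 1$, so the stray term $\norm{U_0}^{2}\leq C=C\norm{R_0(\la)}^{2}$ can be absorbed into $\sum_{k=1}^{N}\norm{R_k(\la)}^{2}$ at the price of a factor $2$, yielding $\norm{U}_{[0,N]}^{2}\leq 2C\norm{R(\la)}_{[1,N]}^{2}$. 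The vector analog goes through with no such factor $2$, because $\|(u_{-1},u_0)^T\|^2=\|u_{-1}\|^2+\|u_0\|^2$ exactly.

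Finally, to pass from integer $N$ to real $t\geq 1$, I invoke Observation~\ref{prop:15}: both sides of each integer inequality are (up to the constants) affine interpolations of discrete partial sums, namely $\norm{U}_{[0,t]}^{2}=\af{S_{U,0}}(t)$, $\norm{R(\la)}_{[1,t]}^{2}=\af{S_{R(\la),1}}(t)$, $\lnorm{R(\la)}_{[1,t]}^{2}=\af{\check{S}_{R(\la),1}}(t)$; since affine extension preserves any inequality $F\leq G$ that holds on integers (it is a pointwise convex combination of neighbouring integer values), all three inequalities \eqref{eq:87} and \eqref{eq:88} extend to all $t\geq 1$. The ``in particular'' estimate \eqref{eq:88'} is then obtained by taking the upper matrix bound for $U$ and the lower matrix bound for $V$, using the normalisation $\norm{U_{-1}}^{2}+\norm{U_0}^{2}=\norm{V_{-1}}^{2}+\norm{V_0}^{2}=1$ from $U,V\in\NMGEV{\la}$, so that the constant $2/(1/4)=8$ appears. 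The only mildly delicate point is the verification $\norm{R_1(\la)}\geq 1$ that underlies the factor $2$ in the matrix upper bound; everything else is bookkeeping plus the affine-interpolation extension.
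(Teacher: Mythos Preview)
Your approach is essentially the paper's: exploit $\bigl(\begin{smallmatrix}U_{n-1}\\U_n\end{smallmatrix}\bigr)=R_n(\la)\bigl(\begin{smallmatrix}U_{-1}\\U_0\end{smallmatrix}\bigr)$ to get pointwise two-sided control, sum, then use the affine-interpolation observation to pass from integers to real $t$. However, one step is misstated and another is superfluous.

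The pointwise matrix upper bound $\norm{U_{n-1}}^2+\norm{U_n}^2\leq C\,\norm{R_n(\la)}^2$ is \emph{false} in general: the operator norm of a block column satisfies only $\max(\norm{X}^2,\norm{Y}^2)\leq\bigl\|\bigl(\begin{smallmatrix}X\\Y\end{smallmatrix}\bigr)\bigr\|^2$, not $\norm{X}^2+\norm{Y}^2\leq\bigl\|\bigl(\begin{smallmatrix}X\\Y\end{smallmatrix}\bigr)\bigr\|^2$. (Take $d=2$, $U_{-1}=\Id$, $U_0=0$, and $R_n$ a permutation matrix sending $(\Id,0)^T$ to $(\mathrm{diag}(1,0),\mathrm{diag}(0,1))^T$: then $C=1$, $\norm{R_n}=1$, but $\norm{U_{n-1}}^2+\norm{U_n}^2=2$.) The correct bound is $\norm{U_{n-1}}^2+\norm{U_n}^2\leq 2C\,\norm{R_n(\la)}^2$, which is exactly the paper's \eqref{eq:92}.

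With this correction, your ``fix'' involving $\norm{R_1(\la)}\geq 1$ is unnecessary: your own sandwich $\norm{U}_{[0,N]}^2\leq\sum_{n=1}^N\bigl(\norm{U_{n-1}}^2+\norm{U_n}^2\bigr)$ already accounts for the $\norm{U_0}^2$ term, so summing the corrected pointwise bound gives $\norm{U}_{[0,N]}^2\leq 2C\,\norm{R(\la)}_{[1,N]}^2$ directly. This is precisely how the paper proceeds (it introduces $\vec{x}_k=(\norm{U_{k-1}},\norm{U_k})^T$ and repeats the vector-case bookkeeping with the extra factors from \eqref{eq:91}--\eqref{eq:92}). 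The rest of your argument---the vector case, the lower matrix bound, the affine-interpolation extension, and the deduction of \eqref{eq:88'}---is correct and matches the paper.
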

\begin{proof}
If $u_{-1} = u_0 = 0$, then $u = 0$, and the first assertion holds, so  assume that $\norm{u_{-1}}_{\CC^d}^2 + \norm{u_0}_{\CC^d}^2 > 0$. Set
\[
    \vec{u}_k :=
    \begin{pmatrix}
        u_{k-1} \\
        u_k
    \end{pmatrix}, \quad k \geq 0.
\]
Then for any $k \geq 1$ it holds
$
    \vec{u}_k
    =
    R_k(\la)
    \vec{u}_0
$.
Thus
\begin{equation} \label{eq:89}
    \norm{\vec{u}_0}_{\CC^{2d}}^2 \lnorm{R_k(\la)}^2
    \leq 
    \norm{\vec{u}_k}_{\CC^{2d}}^2
    \leq 
    \norm{\vec{u}_0}_{\CC^{2d}}^2 \norm{R_k(\la)}^2, \quad k \geq 1.
\end{equation}
Then by \cite[Observation 2.4]{Weyl} we get
\[
    \norm{\vec{u}_0}_{\CC^{2d}}^2
    \lnorm{R(\la)}^2_{[1,t]}
    \leq 
    \norm{\vec{u}}_{[1,t]}^2
    \leq 
    \norm{\vec{u}_0}_{\CC^{2d}}^2
    \norm{R(\la)}^2_{[1,t]}, \quad t \geq 1.
\]
Thus \eqref{eq:87} follows, since we have
\[
    \norm{u}_{[0,t]}^2
    \leq
    \norm{\vec{u}}_{[1,t]}^2
    \leq 
    2 \norm{u}_{[0,t]}^2
\]
 by direct computations.
Now, consider $U \in \MGEV{ \la}$. Then by \cite[Fact 4.4]{Weyl} \ for any $v \in \CC^d$ the sequence $U v$ is a vector generalized eigenvector. Thus \eqref{eq:89} implies
\begin{equation} \label{eq:90}
    \big( \norm{U_{-1} v}_{\CC^{d}}^2 + \norm{U_0 v}_{\CC^{d}}^2 \big) \lnorm{R_k(\la)}^2
    \leq 
    \norm{
    \begin{pmatrix}
        U_{k-1} v\\
        U_k v
    \end{pmatrix}
    }_{\CC^{2d}}^2
    \leq 
    \big( \norm{U_{-1} v}_{\CC^{d}}^2 + \norm{U_0 v}_{\CC^{d}}^2 \big) \norm{R_k(\la)}^2.
\end{equation}
In particular,
\[
    \big( \norm{U_{-1} v}_{\CC^{d}}^2 + \norm{U_0 v}_{\CC^{d}}^2 \big) \lnorm{R_k(\la)}^2
    \leq 
    \big( \norm{U_{k-1}}^2 + \norm{U_k}^2 \big) \norm{v}_{\CC^{d}}^2.
\]
Thus
\begin{equation} \label{eq:91}
    \frac{1}{2} 
    \big( \norm{U_{-1}}^2 + \norm{U_0}^2 \big) \lnorm{R_k(\la)}^2
    \leq 
    \norm{U_{k-1}}^2 + \norm{U_k}^2.
\end{equation}
On the other hand, by \eqref{eq:90}
\[
    \norm{U_{k-1} v}_{\CC^{d}}^2 + \norm{U_k v}_{\CC^{d}}^2 
    \leq
    \big( \norm{U_{-1}}^2 + \norm{U_0}^2 \big) \norm{v}_{\CC^{d}}^2 \norm{R_k(\la)}^2, 
\]
and consequently,
\begin{equation} \label{eq:92}
    \norm{U_{k-1}}^2 + \norm{U_k}^2 
    \leq
    2 \big( \norm{U_{-1}}^2 + \norm{U_0}^2 \big) \norm{R_k(\la)}^2.
\end{equation}
For any $k \geq 0$ let us define 
\[
    \vec{x}_k :=
    \begin{pmatrix}
        \norm{U_{k-1}} \\
        \norm{U_k}
    \end{pmatrix} \in \CC^2.
\]
Therefore, by combining \eqref{eq:91} and \eqref{eq:92} we obtain
\[
    \frac{1}{2} 
    \norm{\vec{x}_0}_{\CC^2}^2 \lnorm{R_k(\la)}^2
    \leq 
    \norm{\vec{x}_k}_{\CC^2}^2
    \leq 
    2 \norm{\vec{x}_0}_{\CC^2}^2 \norm{R_k(\la)}^2, \quad k \geq 1,
\]
which is an analogue of \eqref{eq:89}.
Now  we get  \eqref{eq:88} by a similar manner.
\end{proof}

In view of this result let us consider now \ \ 
$\barrierTR : \RR \times [1, +\infty) \to \RR$ \  given  by the formula

\begin{equation} \label{eq:120}
    \barrierTR(\la, t) := 
    8 \left(\frac{\norm{R(\la)}_{[1,t]}}{\lnorm{R(\la)}_{[1,t]}}\right)^2, \qquad   \la\in\RR, \  t\geq 1.  
\end{equation}

The last assertion of the proposition yields exactly:

\begin{corollary} \label{cor:4}
  $\barrierTR$ \  is  a barrier for $J$ on $\RR$.
\end{corollary}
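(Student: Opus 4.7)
The plan is essentially a direct application of the inequality \eqref{eq:88'} that has just been proved. Fix an arbitrary $\la \in \RR$ and take any pair $U, V \in \MGEV{\la}$ satisfying the barrier normalization \eqref{normal}, i.e., $U, V \in \NMGEV{\la}$. For any $t \geq 1$, \eqref{eq:88'} yields
\[
    \left(\frac{\norm{U}_{[0,t]}}{\norm{V}_{[0,t]}}\right)^2
    \leq
    8 \left(\frac{\norm{R(\la)}_{[1,t]}}{\lnorm{R(\la)}_{[1,t]}}\right)^2,
\]
and the right-hand side is by definition $\barrierTR(\la,t)$; comparing with Definition~\ref{def:barrier}, this is exactly the defining inequality \eqref{eq:107} with $G = \RR$.

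Before invoking \eqref{eq:88'}, I would record one short sanity check, namely that the ratio in \eqref{eq:120} is well-defined and finite for all $t \geq 1$. This amounts to $\lnorm{R(\la)}_{[1,t]} > 0$, which follows from $R_1(\la) = T_0(\la)$ being invertible by \eqref{eq:35} together with the standing assumption $\det A_0 \neq 0$, so $\lnorm{R_1(\la)} > 0$ and hence $\lnorm{R(\la)}_{[1,t]}^2 \geq \lnorm{R_1(\la)}^2 > 0$ by the monotonicity of the interpolated semi-norm \eqref{eq:105c}.

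Since the real work (propagating the initial data through the transfer matrices and passing from the vector bound \eqref{eq:89} to the matrix bound via \eqref{eq:91}--\eqref{eq:92}) has already been carried out in the preceding proposition, there is no genuine obstacle left at this stage: the corollary is an unpacking of notation. If I were approaching this step without that proposition in hand, the main difficulty would have been precisely the lower bound in \eqref{eq:88}, where one has to replace the operator norm of the transfer matrix by its minimum modulus while keeping track of how the matrix initial data $(U_{-1}, U_0)$ (rather than a single vector) is transported.
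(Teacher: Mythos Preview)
Your proof is correct and matches the paper's approach exactly: the corollary is stated immediately after \eqref{eq:88'} with the one-line justification that the last assertion of the proposition yields it directly. Your additional sanity check about $\lnorm{R(\la)}_{[1,t]} > 0$ is the content of the paper's footnote attached to \eqref{eq:120}.
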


For any $G\subset\RR$ \ \  we call \ $\restr{\barrierTR}{G\times[1,+\infty)}$ \ \ \  {\em the transfer matrix barrier for $J$ on $G$}.

The convenience and the importance of our definitions of barrier and of the barrier-nonsubordinacy will be  visible in  proofs in  the next subsections.
 
\subsection{The main result and its consequences}

Assume, as before, that $J$ is s.a. and consider the matrix Weyl function $\Wej$ for $J$ (see Definition~\ref{maWe}). Before we start to ``control'' in a sense its boundary limits  we need to make  use of the choice of Jitomirskaya--Last type semi-norms, and formulate a result being a block case analog of the appropriate   scalar case one result from \cite{Jitomirskaya1999}.

\begin{proposition} \label{defJL}
Suppose that $J$ is s.a. For any  $\la \in \RR$  there exists a unique   \ $\JLf{\la} : \RR_+ \to \RR_+$ \  satisfying 
\begin{equation} \label{eq:13}
	\norm{P(\la)}_{[0, \JLf{\la}(\epsilon)]} 
	\norm{Q(\la)}_{[0, \JLf{\la}(\epsilon)]} = \frac{1}{2 \epsilon},\quad \eps>0.
\end{equation}
 Moreover, $\JLf{\la}$ is a strictly decreasing continuous function and satisfies
\begin{equation} \label{eq:106a}
    \lim_{\epsilon \to 0^+} \JLf{\la}(\epsilon) = +\infty, \quad
    \lim_{\epsilon \to +\infty} \JLf{\la}(\epsilon) = 0.
\end{equation}
Consequently, its inverse $\JLf{\la}^{-1}$ is also a strictly decreasing continuous function and satisfies
\begin{equation} \label{eq:106b}
    \lim_{t \to 0^+} \JLf{\la}^{-1}(t) = +\infty, \quad
    \lim_{t \to +\infty} \JLf{\la}^{-1}(t) = 0.
\end{equation}
\end{proposition}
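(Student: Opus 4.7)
The plan is to reduce the proposition to analyzing the function
\[
    f_\la(t) := \norm{P(\la)}_{[0,t]} \cdot \norm{Q(\la)}_{[0,t]}, \quad t \in (0,+\infty),
\]
showing it is a continuous strictly increasing bijection of $(0,+\infty)$ onto itself, and then defining $\JLf{\la}(\epsilon) := f_\la^{-1}\!\bigl(\tfrac{1}{2\epsilon}\bigr)$. Continuity of $f_\la$ is immediate from Observation~\ref{prop:15}, which exhibits each squared seminorm as an affine interpolation of a non-decreasing sequence of partial sums.

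Using the initial conditions \eqref{eq:22} together with the computation $Q_1(\la) = A_0^{-1}$ from \eqref{eq:22'}, I would read off, for $t \in (0,1)$,
\[
    \norm{P(\la)}_{[0,t]}^2 = 1 + t\,\norm{P_1(\la)}^2, \qquad \norm{Q(\la)}_{[0,t]}^2 = t\,\norm{A_0^{-1}}^2,
\]
which yields $\lim_{t \to 0^+} f_\la(t) = 0$ together with the a priori bounds $\norm{P(\la)}_{[0,t]} \geq 1$ and $\norm{Q(\la)}_{[0,t]} \geq \norm{A_0^{-1}} > 0$ for $t \geq 1$. For strict monotonicity on $(0,+\infty)$, both factors are non-decreasing and each is affine on every interval $[n,n+1]$; Liouville--Ostrogradsky \eqref{eq:LO2}, applied with $w = \la$ to the index $n+1$, gives
\[
    Q_{n+1}(\la) P_n(\la)^* - P_{n+1}(\la) Q_n(\la)^* = A_n^{-1} \neq 0,
\]
so at least one of $P_{n+1}(\la)$, $Q_{n+1}(\la)$ is non-zero and the corresponding factor is strictly increasing on $[n,n+1]$, while the other stays strictly positive there; hence $f_\la$ itself is strictly increasing on all of $(0,+\infty)$.

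The main obstacle is showing $\lim_{t \to +\infty} f_\la(t) = +\infty$. Here I would argue by contradiction: if $f_\la$ were bounded, then since each factor is non-decreasing and bounded below by a positive constant for $t \geq 1$, both factors would individually be bounded, forcing $P(\la), Q(\la) \in \ldnmj{\Mdc}$. By Fact~\ref{factsolve-ma}\ref{factsolve-ma:2} every emgev has the form $P(\la) S + Q(\la) T$, so every emgev would lie in $\ldnmj{\Mdc}$; by Fact~\ref{factsolve-ma-ve} the same holds column-wise, so every egev for $J$ and $\la$, restricted to $\Nz$, lies in $\ldnz{\CC^d}$. This contradicts self-adjointness of $J$ via \cite[Theorem 1.3]{Dyukarev2020}, exactly as invoked in the proof of Theorem~\ref{prop:16}.

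With $f_\la$ established as a continuous strictly increasing bijection of $(0,+\infty)$ onto itself, its inverse $f_\la^{-1}$ has the same properties, and $\JLf{\la}(\epsilon) := f_\la^{-1}(1/(2\epsilon))$ is the unique function on $\RR_+$ solving \eqref{eq:13}. Being the composition of the continuous strictly decreasing bijection $\epsilon \mapsto 1/(2\epsilon)$ of $\RR_+$ with the continuous strictly increasing $f_\la^{-1}$, it is continuous and strictly decreasing; the limits \eqref{eq:106a} follow by tracking endpoints under the two maps. The corresponding statement \eqref{eq:106b} for $\JLf{\la}^{-1}$ is then the standard consequence for the inverse of such a map.
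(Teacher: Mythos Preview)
Your proposal is correct and follows essentially the same route as the paper: define $f_\la(t) = \norm{P(\la)}_{[0,t]} \norm{Q(\la)}_{[0,t]}$, show it is a continuous strictly increasing bijection of $(0,+\infty)$ onto itself, and set $\JLf{\la} = f_\la^{-1} \circ (\epsilon \mapsto 1/(2\epsilon))$. The only cosmetic difference is in the strict monotonicity step: the paper argues that $P_{n+1}(\la) = Q_{n+1}(\la) = 0$ would make $R_{n+1}(\la)$ singular (via \eqref{eq:84}, contradicting \eqref{eq:36}), whereas you invoke the Liouville--Ostrogradsky identity \eqref{eq:LO2} directly --- but since Proposition~\ref{LO} is itself derived from \eqref{eq:36}, these are the same argument in slightly different clothing.
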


The proof is  based on the fact that the  function 'of $t$' from $\RR_+$ to $\RR_+$ given by \ $
    \norm{P(\la)}_{[0,t]} 
    \norm{Q(\la)}_{[0,t]}$ \ is continuous, strictly increasing and surjective (see  \cite[Proposition 4.6 and its proof]{Weyl}).
And  $\JLf{\la}$ is called \emph{J-L function (for $J$ and $\la$)}.

We are ready to formulate our main result ``on controlling  the boundary limits of the matrix Weyl function'', being probably the most important result of this work.
\begin{theorem} \label{thm:9}
Assume that $J$ is s.a. and $G\subset\RR$. If $\barrier$ is a barrier for $J$ on $G$, then for any $\la \in G$ and any  $\eps>0$ with $\JLf{\la}(\epsilon) \geq 1$ 
\begin{equation} \label{eq:116}
    \left(8 \barrier \big( \la, \JLf{\la}(\epsilon) \big)\right)^{-1}\Id
    \leq 
    \Im \Wej(\la + i \epsilon)
\end{equation}
and
\begin{equation} \label{eq:117}
    s_-(\la, \epsilon) \leq
    \norm{\Wej(\la + i \epsilon)} \leq 
    s_+(\la, \epsilon), 
\end{equation}
where
\begin{equation} \label{eq:115}
    s_{\pm}(\la, \epsilon) := 
    4d \barrier \big( \la, \JLf{\la}(\epsilon) \big) \pm \sqrt{ \Big( 4d \barrier \big( \la, \JLf{\la}(\epsilon) \big) \Big)^2 - 1}.
\end{equation}
\end{theorem}

We present the proof in the next subsection. But now let us remark  only that the square root in \eqref{eq:115} is at least $15$, since $d \geq 1$ and $\barrier(\la, t) \geq 1$ for any $t \geq 1$. Consequently, both $s_-(\la, \epsilon)$ and $s_+(\la, \epsilon)$ are positive for the considered $\la$ and $\epsilon$, hence both estimates in \eqref{eq:117} could be of significant importance.

Having Theorem~\ref{thm:9} and all the delicate relations between various  objects connected somehow to $J$ and  described in the previous sections, we can finally  formulate and prove the most important abstract spectral result of this article.

Denote for short the spectral  matrix measure for $J$  by $M$, i.e., 
$
    M := E_{J,\vec{\varphi}},
$
where  $\vec{\varphi}=(\varphi_1, \ldots, \varphi_k)$ is canonical cyclic system \eqref{fix-fincyc} for $J$. Recall that some  important results and some notation related to the absolutely continuous and the singular part  of  $M$, such as Theorem~\ref{ThdensM}, the set $\BLW$ ``with boundary limits for $\Wej$'' and the density $D$ (see \eqref{eq:Mac:a}) of $\Mac$ on $\BLW$ w.r.t. the Lebesgue measure \ $|\cdot|$ are  presented in  Section~\ref{sec:boundaryWeyl}. Recall that  the absolute continuity is understood with respect to the Lebesgue measure, if no other choice is made.

\begin{theorem} \label{thm:10}
Assume that $J$ is s.a.,   $G \in \Bor(\RR)$ and $\barrier$ is a barrier for $J$ on $G$. If $J$ is $\barrier$-nonsubordinate on $G$, then
\begin{enumerate}[label=(\alph*)]
    \item \label{thm:10:1}
    $M$ is absolutely continuous on $G$.
    
    \item \label{thm:10:2}
    the density $D$ of $\Mac$ is  an invertible matrix at any  $\la\in G \cap \BLW$. 
    
    \item \label{thm:10:3}
    $J$ is absolutely continuous in $G$ and \ \ $\clleb{G} \subset \sigmaAC(J)$. 
\end{enumerate}
If, moreover, $J$ is uniformly $\barrier$-nonsubordinate on $G$, then there exist  $c_1, c_2>0$ such that
\beql{szacDens}
    c_1 \Id \leq D(\la) \leq c_2 \Id, \quad \la \in G \cap \BLW.
\eeq
\end{theorem}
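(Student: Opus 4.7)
The plan is to combine Theorem~\ref{thm:9}, which controls $\Wej(\la + i\eps)$ near the real axis in terms of the barrier, with the Herglotz-theoretic description of $\Mac$ and $\Msing$ from Theorem~\ref{ThdensM}, and to transfer the conclusion to $J$ via Proposition~\ref{cor:1}. Fix $\la \in G$. Since $\JLf{\la}(\eps) \to +\infty$ as $\eps \to 0^+$ by Proposition~\ref{defJL}, and since $L(\la) := \liminf_{t \to +\infty} \barrier(\la, t) < +\infty$ by hypothesis, I can extract a sequence $\eps_n = \eps_n(\la) \to 0^+$ along which $\barrier(\la, \JLf{\la}(\eps_n)) \to L(\la)$. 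All estimates at this $\la$ will be evaluated along such a sequence.

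Next I would exclude $\la$ from $\Ssing$. Observing that $s_+(\la, \eps) \leq 8d \, \barrier(\la, \JLf{\la}(\eps))$ by \eqref{eq:115}, the upper estimate in \eqref{eq:117} combined with Proposition~\ref{prop:18}~\ref{prop:18:2} and~\ref{prop:18:4} applied to the positive semidefinite matrix $\Im \Wej(\la + i\eps_n)$ gives
\[
    \Im \tr \Wej(\la + i\eps_n) = \tr \Im \Wej(\la + i\eps_n) \leq d \norm{\Wej(\la + i\eps_n)} \leq 8 d^2 \, \barrier(\la, \JLf{\la}(\eps_n)),
\]
which stays bounded as $n \to \infty$. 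Hence $\lim_{\eps \to 0^+} \Im \tr \Wej(\la + i\eps) \neq +\infty$, so $\la \notin \Ssing$. Consequently $G \cap \Ssing = \emptyset$, and since $\Ssing$ is a support of $\Msing$ by Theorem~\ref{ThdensM}~\ref{ThdensM:1}, this yields $\Msing(G) = 0$, proving~\ref{thm:10:1}. The first half of~\ref{thm:10:3}, namely that $J$ is absolutely continuous in $G$, then follows from Proposition~\ref{cor:1}~\ref{cor:1:1}.

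For~\ref{thm:10:2}, take $\la \in G \cap \BLW$. Passing to the limit along the subsequence $\eps_n$ in the lower bound \eqref{eq:116} and using that the cone of positive semidefinite matrices is closed, I obtain $\Im \Wej(\la + i0) \geq (8 L(\la))^{-1} \Id$, so $D(\la) = \pi^{-1} \Im \Wej(\la + i0)$ is strictly positive definite, hence invertible. The remaining inclusion $\clleb{G} \subset \sigmaAC(J)$ in~\ref{thm:10:3} then follows from Proposition~\ref{cor:1}~\ref{cor:1:2}: each $\la \in G$ lies either in $\BLW$, in which case $\Im \Wej(\la + i0) \neq 0$ by the step just performed and so $\la \in \Sac$, or outside $\BLW$, in which case (having already excluded $\Ssing$) $\la \in \RR \setminus (\BLW \cup \Ssing)$. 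Therefore $G \subset \Sac \cup (\RR \setminus (\BLW \cup \Ssing))$, which is exactly the hypothesis of Proposition~\ref{cor:1}~\ref{cor:1:2}.

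Under uniform $\barrier$-nonsubordinacy with $L := \sup_{\la \in G} L(\la) < +\infty$, all the constants above become $\la$-independent, immediately yielding $D(\la) \geq (8 \pi L)^{-1} \Id$ for every $\la \in G \cap \BLW$. For the matching upper bound, the same subsequence argument combined with $\norm{\Wej(\la + i\eps)} \leq s_+(\la, \eps) \leq 8d \, \barrier(\la, \JLf{\la}(\eps))$ yields $\norm{\Wej(\la + i0)} \leq 8dL$; since $\Im \Wej(\la + i0) \geq 0$, this operator-norm bound translates into $D(\la) \leq (8dL/\pi) \Id$, giving \eqref{szacDens} with $c_1 := (8 \pi L)^{-1}$ and $c_2 := 8dL/\pi$. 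The main delicacy is that the two estimates in Theorem~\ref{thm:9} are available only along the sparse subsequence on which $\barrier(\la, \JLf{\la}(\eps_n))$ approaches $L(\la)$; one must exploit the a priori existence of the full boundary limit at $\la \in \BLW$ to argue that every subsequential limit of $\Im \Wej(\la + i\eps_n)$ necessarily coincides with $\Im \Wej(\la + i0)$, so that the matrix inequality $\Im \Wej(\la + i0) \geq (8L(\la))^{-1} \Id$ genuinely survives the passage to the limit.
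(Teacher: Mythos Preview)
Your proof is correct and follows essentially the same route as the paper: both combine the barrier estimates of Theorem~\ref{thm:9} with the boundary-value description of $\Mac$ and $\Msing$ from Theorem~\ref{ThdensM}, passing through Proposition~\ref{cor:1}, and both hinge on translating $\liminf_{t\to+\infty}\barrier(\la,t)<\infty$ into a subsequence $\eps_n\to 0^+$ via the bijection $\JLf{\la}$. Your organization is arguably a bit cleaner---you show directly that $G\subset \Sac\cup(\RR\setminus(\BLW\cup\Ssing))$, whereas the paper works with $G\cap\BLW$ and then invokes $|\RR\setminus\BLW|=0$ to recover $\clleb{G}$---but the substance is the same.
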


\begin{proof}
Suppose that $J$ is $\barrier$-nonsubordinate on $G$.
By Theorem~\ref{thm:9} 
\begin{equation} \label{eq:110}
    \liminf_{\epsilon \to 0^+} \norm{\Wej(\la + i \epsilon)} \leq
    8d \liminf_{\epsilon \to 0^+} \barrier \big( \la, \JLf{\la}(\epsilon) \big), \quad \la \in G.
\end{equation}
By the definition of ``$\liminf$'' there  exists a 
sequence $(t_k)_{k \in \NN}$  in $[1, +\infty)$ with \ 
$\lim_{k \to +\infty} t_k = +\infty$, such that 
\[
    \liminf_{t \to +\infty} 
    \barrier(\la, t)
    =
    \lim_{k \to +\infty} 
    \barrier(\la, t_k).
\]
By Proposition~\ref{defJL} for each $k$ we  define
\[
    \epsilon_k := \JLf{\la}^{-1}(t_k)
\]
and by \eqref{eq:106b} we get
\[
    \lim_{k \to +\infty} \epsilon_k = 0.
\]
Hence, by  \eqref{eq:132} 
\begin{equation} \label{eq:118}
    \liminf_{\epsilon \to 0^+} 
    \barrier \big( \la, \JLf{\la}(\epsilon) \big)
    \leq
    \lim_{k \to +\infty} 
    \barrier \big( \la, \JLf{\la}(\epsilon_k) \big)
    =
    \lim_{k \to +\infty} \barrier(\la, t_k)
    = 
    \liminf_{t \to +\infty} \barrier(\la, t)<+\infty,
\end{equation}
which by \eqref{eq:110}  implies 
\begin{equation} \label{eq:108}
    \liminf_{\epsilon \to 0^+} \norm{\Wej(\la + i \epsilon)} < +\infty, \quad \la \in G,
\end{equation}
and by \eqref{Ssi} this, in particular, yields
 $G \subset\ \RR \setminus \Ssing$. Now, using Theorem~\ref{ThdensM}\ref{ThdensM:1}, we obtain assertion~\ref{thm:10:1}:    $M$ is a.c. on $G$.

Observe now that \eqref{eq:116} implies
\[
    \frac{1}{8 \liminf_{\epsilon \to 0^+} \barrier \big( \la, \JLf{\la}(\epsilon) \big)} \norm{v}_{\CC^{d}}^2 
    \leq 
    \limsup_{\epsilon \to 0^+}
    \big\langle 
        \big( \Im \Wej(\la + i \epsilon) \big) v, v
    \big\rangle_{\CC^{d}},\qquad v\in\CC^d,
\]
which by \eqref{eq:118} and $\barrier$-nonsubordinacy yields 
\begin{equation}\label{forma}
    c(\la) \norm{v}_{\CC^{d}}^2 
    \leq 
    \limsup_{\epsilon \to 0^+}
    \big\langle 
        \big( \Im \Wej(\la + i \epsilon) \big) v, v
    \big\rangle_{\CC^{d}}, \qquad v\in\CC^d,
\end{equation}
where
\begin{equation}\label{forma'}
    c(\la) = \Big( 8 \liminf_{t \to +\infty} \barrier(\la, t) \Big)^{-1} > 0, \quad \la \in G.
\end{equation}

Assume now that $\la \in G \cap \BLW$.
To get assertion~\ref{thm:10:2} \   in view of \eqref{eq:Mac:a} \  it is enough to prove that 
 the matrix $\Im \big( \Wej(\la + i 0) \big)$ is invertible.
But now   
\[
    \limsup_{\epsilon \to 0^+} \Im \Wej(\la + i \epsilon)=\lim_{\epsilon \to 0^+} \Im \Wej(\la + i \epsilon)=\Wej(\la + i0),
\]     
so,     by \eqref{forma},
\begin{equation}\label{forma-dol}
    c(\la)
    \leq 
     \Im \Wej(\la + i0) 
\end{equation}
Therefore  $\Im \Wej(\la + i 0)$ is invertible, by \eqref{forma'}, and   $G \cap \BLW \subset \Sacd\subset\Sac$.

Now Proposition~\ref{cor:1}\ref{cor:1:2} gives
\[
\clleb{G \cap \BLW} \subset \sigmaAC(J).
\]
But $\clleb{G \cap \BLW} = \clleb{G}$, and  the conclusion~\ref{thm:10:3} follows.

In the  uniform case, the LHS bound on $D(\la)$ follows directly  from \eqref{forma-dol} and \eqref{forma'}. And to get the RHS estimate, it  suffices to use the fact that the norm of a matrix gives the upper bound for the  quadratic form, and so it is  sufficient to use \eqref{eq:110}.
\end{proof}

\subsection{The proof of the main result}

Before we turn to the proof of Theorem~\ref{thm:9} we need some preparations.
For fixed $\la \in \RR$ and $\epsilon > 0$ let us define
\begin{align}
	\label{eq:14a}
	U_{\epsilon} &:= Q(\la + i \epsilon)  + P(\la + i \epsilon) \Wej(\la + i \epsilon) \\
	\label{eq:14b}
	V_\epsilon &:= Q(\la) + P(\la) \Wej(\la + i \epsilon).
\end{align}

The following lemma can be found  e.g. in \cite[Section 2]{Kostyuchenko1998} and \cite[Proposition 4.8 with the complete proof]{Weyl}.
\begin{lemma}
Let $F \in \lnz{\Mdc}$ and $z\in\CC$. The unique solution $(S_n)_{n\geq-1}\in \lnmj{\Mdc}$ \ of
\begin{equation} \label{eq:59}
    A_n S_{n+1} + B_n S_n + A_{n-1}^* S_{n-1} = z S_n + F_n, \quad n \geq 0.
\end{equation}
with the initial conditions $S_{-1} = S_0 = 0$ is equal to
\begin{equation} \label{eq:58}
    S_n := 
    \sum_{k=0}^{n-1} 
    \Big( Q_n(z) \big( P_k(\overline{z}) \big)^* - P_n(z) \big( Q_k(\overline{z}) \big)^* \Big) F_k, \quad n \geq -1.
\end{equation}
\end{lemma}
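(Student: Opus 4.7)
Since every $A_n$ is invertible, the recurrence \eqref{eq:59} is second order in $n$ with leading invertible coefficient, hence once $S_{-1}$ and $S_0$ are prescribed the sequence $(S_n)$ is uniquely determined. Thus it suffices to check that the explicit formula \eqref{eq:58} produces a sequence satisfying both the initial conditions $S_{-1}=S_0=0$ and the recurrence.

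For $n=-1$ and $n=0$ the sum in \eqref{eq:58} is empty, so the initial conditions are immediate. Write
\[
    G_{n,k}(z) := Q_n(z)\big(P_k(\overline{z})\big)^* - P_n(z)\big(Q_k(\overline{z})\big)^*,
\]
so that $S_n=\sum_{k=0}^{n-1} G_{n,k}(z) F_k$. Plugging into $A_n S_{n+1}+B_n S_n+A_{n-1}^* S_{n-1}-zS_n$ and splitting according to the index $k$ gives three regimes to control: $k\le n-2$, $k=n-1$, and $k=n$.

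For each fixed $k$, $G_{n,k}(z)$ is an $n$-dependent linear combination (with constant coefficients in $n$) of $Q_n(z)$ and $P_n(z)$; both $Q_\cdot(z)$ and $P_\cdot(z)$ satisfy the homogeneous three-term recurrence, valid for $n\ge 1$ (and in the extended sense also at $n=0$ because we set $A_{-1}=-\Id$). Hence
\[
    A_n G_{n+1,k}(z) + (B_n - z\Id) G_{n,k}(z) + A_{n-1}^* G_{n-1,k}(z) = 0,
\]
so all contributions with $k\le n-2$ cancel. For $k=n-1$ the same computation shows that the contribution equals $-A_{n-1}^* G_{n-1,n-1}(z)$, and \eqref{eq:LO1} of Proposition~\ref{LO} gives $G_{n-1,n-1}(z)=0$, so this term also vanishes. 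Finally, the only surviving contribution is from $k=n$, namely $A_n G_{n+1,n}(z) F_n$.

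The heart of the verification is to show $A_n G_{n+1,n}(z) = \Id$, which forces the right-hand side to be $F_n$. Using the homogeneous recurrence to eliminate $A_n Q_{n+1}(z)$ and $A_n P_{n+1}(z)$ gives
\[
    A_n G_{n+1,n}(z) = (z\Id - B_n)\, G_{n,n}(z) - A_{n-1}^*\bigl[Q_{n-1}(z)\bigl(P_n(\overline{z})\bigr)^* - P_{n-1}(z)\bigl(Q_n(\overline{z})\bigr)^*\bigr].
\]
The first summand is zero by \eqref{eq:LO1}. For the second, I will apply \eqref{eq:LO2} of Proposition~\ref{LO} at parameter $w=\overline{z}$ and index $n$, then take adjoints; this yields
\[
    Q_{n-1}(z)\bigl(P_n(\overline{z})\bigr)^* - P_{n-1}(z)\bigl(Q_n(\overline{z})\bigr)^* = -(A_{n-1}^*)^{-1},
\]
so $A_n G_{n+1,n}(z) = -A_{n-1}^*\cdot\bigl(-(A_{n-1}^*)^{-1}\bigr)=\Id$, as needed. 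The computation for $n=0$ is the same formal calculation, where the $k\le n-2$ and $k=n-1$ regimes are vacuous and one reads off $A_0 S_1 = A_0 Q_1(z) F_0 = F_0$ directly from \eqref{eq:22'}. The main (very mild) obstacle is simply the bookkeeping of the adjoints and of the complex conjugate $\overline{z}$ appearing inside $P_k,Q_k$ when invoking the two Liouville--Ostrogradsky identities.
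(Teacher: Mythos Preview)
Your proof is correct and follows essentially the same route as the paper: verify the initial conditions, use that $G_{\cdot,k}(z)$ satisfies the homogeneous recurrence in $n$ to kill the bulk terms, and then reduce the two boundary contributions to the Liouville--Ostrogradsky identities \eqref{eq:LO1} and \eqref{eq:LO2} (the latter at $w=\overline{z}$, then adjointed). The paper organizes the same computation by isolating $A_n S_{n+1}$ first and collecting the leftover as a three-line expression $\tilde{F}_n$, whose lines correspond exactly to your $k=n$ and $k=n-1$ contributions; the identities invoked and the final identification $A_n G_{n+1,n}(z)=\Id$ are identical.
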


\begin{corollary}
For $\la \in \RR$ define an operator $L : \lnz{\Mdc} \to \lnz{\Mdc}$ by the formula
\begin{equation} \label{eq:61}
    (L F)_m =
    \sum_{k=0}^{m-1} \Big( Q_m(\la) \big( P_k(\la)\big)^* - P_m(\la) \big(Q_k(\la)\big)^* \Big) F_k, \quad m \geq 0.
\end{equation}
Then the sequences $U_\epsilon$ and $V_\epsilon$ defined in \eqref{eq:14a} and \eqref{eq:14b} satisfy for any $X \in \Mdc$
\begin{equation} \label{eq:77}
    (\Id - i \epsilon L) (U_\epsilon X) = V_\epsilon X.
\end{equation}
\end{corollary}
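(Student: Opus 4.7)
The idea is to apply the previous lemma in the direct way: $V_\epsilon X$ should be $U_\epsilon X$ minus a correction that accounts for the fact that $U_\epsilon$ solves the recurrence at $\la + i\epsilon$ while the ingredients of $V_\epsilon$ solve it at $\la$.

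First I would compute initial conditions. Using \eqref{eq:22} (for the parameter $\la + i\epsilon$ and for the parameter $\la$) one sees immediately that
\[
    (U_\epsilon)_{-1} = \Id = (V_\epsilon)_{-1}, \qquad (U_\epsilon)_{0} = \Wej(\la + i\epsilon) = (V_\epsilon)_{0},
\]
so for any $X \in \Mdc$ the sequences $U_\epsilon X$ and $V_\epsilon X$ share the initial pair $(X, \Wej(\la+i\epsilon) X)$.

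Next I would identify the recurrences they satisfy. Since both $P(\la+i\epsilon)$ and $Q(\la+i\epsilon)$ lie in $\MGEV{\la+i\epsilon}$, linearity and Fact~\ref{factsolve-ma}\ref{factsolve-ma:1} give $U_\epsilon X \in \MGEV{\la + i\epsilon}$, i.e.
\[
    A_n (U_\epsilon X)_{n+1} + B_n (U_\epsilon X)_n + A_{n-1}^* (U_\epsilon X)_{n-1} = (\la + i\epsilon)(U_\epsilon X)_n, \quad n \geq 0.
\]
Analogously, $V_\epsilon X \in \MGEV{\la}$ and satisfies the same recurrence but with $\la$ instead of $\la + i\epsilon$. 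Subtracting the two identities and setting $S := U_\epsilon X - V_\epsilon X$ one obtains
\[
    A_n S_{n+1} + B_n S_n + A_{n-1}^* S_{n-1} = \la S_n + F_n, \quad n \geq 0,
\]
where $F_n := i\epsilon\, (U_\epsilon X)_n$. By the computation of initial values above, $S_{-1} = S_0 = 0$.

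Now the previous lemma (with $z = \la$) applies: the unique solution of this inhomogeneous recurrence with zero initial data is given by formula \eqref{eq:58}, which for $z = \la$ is precisely $(LF)_m$ as in \eqref{eq:61} (here $\overline{\la} = \la$). Hence
\[
    U_\epsilon X - V_\epsilon X = S = L F = i\epsilon\, L(U_\epsilon X),
\]
which upon rearranging is exactly \eqref{eq:77}. The only thing to double-check is the right-multiplication by $X$: it passes through $L$ by inspection of \eqref{eq:61}, and it preserves the property of being an (extended) matrix generalized eigenvector by Fact~\ref{factsolve-ma}\ref{factsolve-ma:1}, so no genuine difficulty arises. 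The proof is essentially bookkeeping; the only conceptual step is recognizing $S = U_\epsilon X - V_\epsilon X$ as the output of the variation-of-constants operator $L$ applied to $i\epsilon\, U_\epsilon X$.
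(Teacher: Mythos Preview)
Your proof is correct and follows essentially the same approach as the paper: both verify that $U_\epsilon X$ and $V_\epsilon X$ share initial data, recognize that their difference $S$ satisfies the inhomogeneous recurrence \eqref{eq:59} with $z=\la$ and forcing $F_n=i\epsilon(U_\epsilon X)_n$, and then invoke the preceding lemma to identify $S$ with $i\epsilon\,L(U_\epsilon X)$. You simply spell out the initial-condition computation and the subtraction step in more detail than the paper's terse version.
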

\begin{proof}
Take $X \in \Mdc$.
Observe that $U_\epsilon X$ satisfies \eqref{eq:59} with $z=\la$ and $F_n = i \epsilon (U_{\epsilon})_n X$. Similarly, $V_\epsilon X$ satisfies \eqref{eq:59} with $z = \la$ and $F_n \equiv 0$. Since the initial conditions of $V_\epsilon X$ and $U_\epsilon X$ are the same we get that the sequence $S := U_{\epsilon} X - V_\epsilon X$ can be expressed in the form \eqref{eq:58}. Thus, according to the definition \eqref{eq:61} we can write
\[
    U_\epsilon X - V_\epsilon X = i \epsilon L (U_\epsilon X),
\]
which is equivalent to \eqref{eq:77}.
\end{proof}

\begin{lemma}
Let the operator $L : \lnz{\Mdc} \to \lnz{\Mdc}$ be as above. 
Then for any $t \in [0, +\infty)$
\begin{equation}
    \label{eq:64}
    \norm{L F}_{[0,t]} \leq 2 \norm{P(\la)}_{[0,t]} \norm{Q(\la)}_{[0,t]} \norm{F}_{[0,t]}.
\end{equation}
\end{lemma}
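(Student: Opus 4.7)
The plan is to reduce everything to a double application of Cauchy--Schwarz, and then handle the non-integer values of $t$ by carefully exploiting the definition \eqref{eq:105a} of the interpolated semi-norm rather than trying to compare $f(t)=\norm{LF}_{[0,t]}^2$ with its (non-affine) candidate upper bound as functions.

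First, from the definition \eqref{eq:61} of $L$, the triangle inequality and submultiplicativity of the operator norm yield
\[
    \norm{(LF)_m} \leq \norm{Q_m(\la)} \sum_{k=0}^{m-1} \norm{P_k(\la)}\norm{F_k} + \norm{P_m(\la)} \sum_{k=0}^{m-1} \norm{Q_k(\la)}\norm{F_k}
\]
for $m\ge 1$ (and $(LF)_0=0$). The classical discrete Cauchy--Schwarz inequality, applied to the inner sums and paired with the monotonicity of the seminorms, gives
\[
    \norm{(LF)_m} \leq \bigl(a_m + b_m\bigr)\norm{F}_{[0,t]},
    \qquad 1\leq m \leq \lfloor t\rfloor+1,
\]
where $a_m := \norm{Q_m(\la)}\norm{P(\la)}_{[0,m-1]}$ and $b_m := \norm{P_m(\la)}\norm{Q(\la)}_{[0,m-1]}$. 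Note that $m-1\leq\lfloor t\rfloor\leq t$, so each $\norm{P(\la)}_{[0,m-1]}$ and $\norm{Q(\la)}_{[0,m-1]}$ is bounded by the corresponding value at $t$.

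Next, I would square this pointwise bound and use $(a_m+b_m)^2\leq 2(a_m^2+b_m^2)$. Plugging this into \eqref{eq:105a} gives
\[
    \norm{LF}_{[0,t]}^2
    \leq
    2\norm{F}_{[0,t]}^2 \Big(\Sigma_a(t) + \Sigma_b(t)\Big),
\]
where
\[
    \Sigma_a(t) := \sum_{m=0}^{\lfloor t\rfloor} a_m^2 + \{t\}\, a_{\lfloor t\rfloor+1}^2,
    \qquad
    \Sigma_b(t) := \sum_{m=0}^{\lfloor t\rfloor} b_m^2 + \{t\}\, b_{\lfloor t\rfloor+1}^2.
\]
The crucial step is now to factor out $\norm{P(\la)}_{[0,t]}^2$ from $\Sigma_a(t)$. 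Since $\norm{P(\la)}_{[0,m-1]}^2\leq\norm{P(\la)}_{[0,t]}^2$ for all $m\leq\lfloor t\rfloor+1$, I can pull the common factor out and obtain
\[
    \Sigma_a(t) \leq \norm{P(\la)}_{[0,t]}^2 \Big(\sum_{m=0}^{\lfloor t\rfloor} \norm{Q_m(\la)}^2 + \{t\}\norm{Q_{\lfloor t\rfloor+1}(\la)}^2\Big) = \norm{P(\la)}_{[0,t]}^2\,\norm{Q(\la)}_{[0,t]}^2,
\]
where the last equality is exactly the definition \eqref{eq:105a}. The symmetric argument yields the same bound for $\Sigma_b(t)$. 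Combining these estimates gives $\norm{LF}_{[0,t]}^2 \leq 4\norm{P(\la)}_{[0,t]}^2\norm{Q(\la)}_{[0,t]}^2\norm{F}_{[0,t]}^2$, which is \eqref{eq:64} after taking square roots.

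The only subtlety worth flagging is the bookkeeping for the fractional part $\{t\}$: one might be tempted to interpolate only at integers and then compare the resulting bounds, but the target right-hand side is a triple product of interpolated seminorms and is not the affine interpolation of its integer values, so the naive comparison would fail. Carrying $\{t\}$ symbolically all the way through and noticing that the key inequality $\norm{P(\la)}_{[0,m-1]}\leq\norm{P(\la)}_{[0,t]}$ is valid for the single extra index $m=\lfloor t\rfloor+1$ appearing in $\Sigma_a(t),\Sigma_b(t)$ is precisely what makes the factorisation work.
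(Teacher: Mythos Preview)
Your proof is correct and follows essentially the same approach as the paper: bound $\norm{(LF)_m}$ pointwise via the triangle inequality and Cauchy--Schwarz, then sum with the fractional weight $\{t\}$ carried along. The only cosmetic difference is in the final combination step: the paper majorizes $\norm{P(\la)}_{[0,m-1]}$ by $\norm{P(\la)}_{[0,t]}$ first and then applies the triangle (Minkowski) inequality in the weighted $\ell^2$-norm to the resulting vector $v_m=\norm{Q_m(\la)}\norm{P(\la)}_{[0,t]}+\norm{P_m(\la)}\norm{Q(\la)}_{[0,t]}$, whereas you use the pointwise bound $(a_m+b_m)^2\le 2(a_m^2+b_m^2)$ before factoring out $\norm{P(\la)}_{[0,t]}^2$; both routes yield the same constant $2$.
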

\begin{proof}
Observe that by \eqref{eq:61} for any $m \in \Nz$ we have
\begin{align*}
    \norm{(L F)_m}
    &\leq 
    \norm{Q_m(\la)} \sum_{k=0}^{m-1} \norm{P_k(\la)} \norm{F_k} \\
    &+
    \norm{P_m(\la)} \sum_{k=0}^{m-1} \norm{Q_k(\la)} \norm{F_k}.
\end{align*}
Hence, by Cauchy--Schwarz inequality
\begin{equation} \label{eq:74}
    \norm{(L F)_m} 
    \leq 
    \Big(
    \norm{Q_m(\la)} \norm{P(\la)}_{[0,m-1]}
    +
    \norm{P_m(\la)} \norm{Q(\la)}_{[0,m-1]} 
    \Big)
    \norm{F}_{[0,m-1]}.
\end{equation}
Let $t \in [0, +\infty)$. Then $t \in [n, n+1)$ for some $n \in \Nz$. Therefore, by \eqref{eq:74}
\[
    \norm{(L F)_m} 
    \leq 
    v_m
    \norm{F}_{[0,t]}, \quad m=0,1,\ldots, n+1,
\]
where
\begin{equation} \label{eq:75}
    v_m := 
    \norm{Q_m(\la)} \norm{P(\la)}_{[0,t]}
    +
    \norm{P_m(\la)} \norm{Q(\la)}_{[0,t]}, \quad m=0,1,\ldots,n+1.
\end{equation}
Consequently,
\begin{align}
    \nonumber
    \norm{LF}_{[0,t]}
    &=
    \bigg(
    \sum_{m=0}^{n} \norm{(L F)_m}^2 + \{t\} \norm{(L F)_{n+1}}^2
    \bigg)^{1/2} \\
    \label{eq:76}
    &\leq 
    \bigg(
    \sum_{m=0}^{n} v_m^2 + \{t\} v_{n+1}^2
    \bigg)^{1/2} \norm{F}_{[0,t]}.
\end{align}
Now, by triangle inequality in $\CC^{n+2}$ with Euclidean norm and \eqref{eq:75} we get
\begin{align*}
    \bigg(
    \sum_{m=0}^{n} v_m^2 + \{t\} v_{n+1}^2
    \bigg)^{1/2} 
    &\leq 
    \norm{P(\la)}_{[0,t]}
    \bigg( 
    \sum_{m=0}^n \norm{Q_m(\la)}^2
    + 
    \{t\} \norm{Q_{n+1}(\la)}^2
    \bigg)^{1/2} \\
    &+
    \norm{Q(\la)}_{[0,t]}
    \bigg( 
    \sum_{m=0}^n \norm{P_m(\la)}^2
    + 
    \{t\} \norm{P_{n+1}(\la)}^2
    \bigg)^{1/2} \\
    &=
    2 \norm{P(\la)}_{[0,t]} \norm{Q(\la)}_{[0,t]}
\end{align*}
which combined with \eqref{eq:76} gives \eqref{eq:64}.
\end{proof}

\begin{proposition} \label{prop:8}
For any $X \in \Mdc$ and $t \in [0, +\infty)$ one has
\begin{equation} \label{eq:78}
	\norm{U_{\epsilon} X}_{[0,t]} \geq 
	\norm{V_\epsilon X}_{[0,t]} - 
	2 \epsilon  
	\norm{P(\la)}_{[0,t]} 
	\norm{Q(\la)}_{[0,t]} 
	\norm{U_{\epsilon} X}_{[0,t]}.
\end{equation}
Consequently, if $\JLf{\la}$ is J-L function, then
\begin{equation} \label{eq:79}
	2 \norm{U_{\epsilon} X}_{[0,\JLf{\la}(\epsilon)]} \geq 
	\norm{V_{\epsilon} X}_{[0,\JLf{\la}(\epsilon)]}.
\end{equation}
\end{proposition}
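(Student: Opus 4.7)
The proof is essentially an immediate consequence of the identity \eqref{eq:77} together with the operator norm estimate \eqref{eq:64}; the only ingredient one needs beyond these is the standard triangle inequality for the seminorm $\norm{\cdot}_{[0,t]}$.

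The plan is to start from \eqref{eq:77}, which after rearranging reads
\[
    U_\epsilon X = V_\epsilon X + i\epsilon\, L(U_\epsilon X).
\]
Since $\norm{\cdot}_{[0,t]}$ is a seminorm (being the norm of a weighted $\ell^2$-space of $\Mdc$-valued finite sequences, it obviously satisfies the triangle inequality), I apply the triangle inequality to the identity $V_\epsilon X = U_\epsilon X - i\epsilon L(U_\epsilon X)$, obtaining
\[
    \norm{V_\epsilon X}_{[0,t]} \leq \norm{U_\epsilon X}_{[0,t]} + \epsilon \norm{L(U_\epsilon X)}_{[0,t]}.
\]
Next I insert the bound \eqref{eq:64} applied to $F = U_\epsilon X$, which gives
\[
    \norm{L(U_\epsilon X)}_{[0,t]} \leq 2 \norm{P(\la)}_{[0,t]} \norm{Q(\la)}_{[0,t]} \norm{U_\epsilon X}_{[0,t]}.
\]
Substituting this into the previous inequality and rearranging produces \eqref{eq:78} directly.

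For the consequence \eqref{eq:79}, I specialize $t$ to $\JLf{\la}(\epsilon)$. By the defining property \eqref{eq:13} of the J--L function,
\[
    2\epsilon \, \norm{P(\la)}_{[0,\JLf{\la}(\epsilon)]} \norm{Q(\la)}_{[0,\JLf{\la}(\epsilon)]} = 1,
\]
so the term $2\epsilon \norm{P(\la)}_{[0,t]}\norm{Q(\la)}_{[0,t]}\norm{U_\epsilon X}_{[0,t]}$ appearing on the right-hand side of \eqref{eq:78} becomes exactly $\norm{U_\epsilon X}_{[0,\JLf{\la}(\epsilon)]}$. Moving this to the left-hand side yields $2\norm{U_\epsilon X}_{[0,\JLf{\la}(\epsilon)]} \geq \norm{V_\epsilon X}_{[0,\JLf{\la}(\epsilon)]}$, which is \eqref{eq:79}.

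There is essentially no obstacle here: the hard analytic work was done in establishing the identity \eqref{eq:77} (via the variation-of-constants type computation using the Liouville--Ostrogradsky relations) and the norm bound \eqref{eq:64} (via Cauchy--Schwarz on the convolution structure of $L$). The present proposition is the clean ``bootstrap'' step that turns these into a concrete absorbing estimate, and the Jitomirskaya--Last scale $\JLf{\la}(\epsilon)$ is precisely calibrated so that the factor $2\epsilon\norm{P(\la)}_{[0,t]}\norm{Q(\la)}_{[0,t]}$ equals $1$, making the absorption possible.
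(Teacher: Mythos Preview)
Your proof is correct and follows essentially the same approach as the paper: both start from the identity \eqref{eq:77}, apply the triangle inequality for $\norm{\cdot}_{[0,t]}$ together with the bound \eqref{eq:64}, and then specialize to $t = \JLf{\la}(\epsilon)$ using \eqref{eq:13}. The paper's write-up is slightly more compressed (it records the bound directly as $\norm{V_\epsilon X}_{[0,t]} \leq (1 + 2\epsilon\norm{P(\la)}_{[0,t]}\norm{Q(\la)}_{[0,t]})\norm{U_\epsilon X}_{[0,t]}$), but the content is identical.
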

\begin{proof}
By \eqref{eq:77}
\[
    (\Id - i \epsilon L) (U_{\epsilon} X) = V_{\epsilon} X.
\]
Thus, by \eqref{eq:64} we get
\[
    \norm{V_{\epsilon} X}_{[0,t]}
    \leq 
    \big( 1 + 2 \epsilon \norm{P(\la)}_{[0,t]} \norm{Q(\la)}_{[0,t]} \big) 
    \norm{U_\epsilon X}_{[0,t]}
\]
which implies \eqref{eq:78}. Then the inequality \eqref{eq:79} follows from \eqref{eq:13}.
\end{proof}

\begin{proposition} \label{prop:9}
For any $v \in \CC^d$, $\la \in \RR$ and any $\epsilon > 0$ one has
\begin{equation} \label{eq:114a}
    \frac{1}{\epsilon}
    \big\langle \big( \Im \Wej(\la + i \epsilon) \big) v, v \big\rangle_{\CC^d}
    =
    \norm{ U_\epsilon v}_{[0, +\infty]}^2.
\end{equation}
Moreover,
\begin{equation} \label{eq:114b}
	\norm{U_{\epsilon}}^2_{[0,+\infty]} \leq
	\frac{\tr \big( \Im \Wej(\la+i \epsilon) \big)}{\epsilon}.
\end{equation}
\end{proposition}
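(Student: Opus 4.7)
The plan is to identify $U_\epsilon$ with the Weyl matrix solution at $z = \la+i\epsilon$ and then translate the two claimed identities into elementary spectral-theorem calculations for the resolvent applied to $\delta_0(v)$.

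First, observe that by Proposition~\ref{prop-ltwosol} the sequence $U_\epsilon = Q(\la+i\epsilon) + P(\la+i\epsilon)\Wej(\la+i\epsilon)$ is exactly the Weyl matrix solution $U(\la+i\epsilon)$. In view of \eqref{def-U(z)} and \eqref{ujz}, its restriction to $\NN_0$ coincides with $[u^{(1)}(\la+i\epsilon),\ldots,u^{(d)}(\la+i\epsilon)]$, where $u^{(j)}(z) = (J-z\Id)^{-1}\delta_0(e_j)$. Hence for $v \in \CC^d$ and $n\geq 0$, by linearity,
\[
    (U_\epsilon v)(n) = \bigl[(J - (\la+i\epsilon)\Id)^{-1}\delta_0(v)\bigr](n),
\]
so that (since the index $-1$ is absent from the summation)
\[
    \norm{U_\epsilon v}^2_{[0,+\infty]} = \bigl\|(J-(\la+i\epsilon)\Id)^{-1}\delta_0(v)\bigr\|^2_{\ldnz{\CC^d}}.
\]

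Next I would compute both sides of \eqref{eq:114a} via the scalar spectral measure $\mu_v := E_{J,\delta_0(v)}$. By the functional calculus,
\[
    \norm{U_\epsilon v}^2_{[0,+\infty]} = \int_\RR \frac{d\mu_v(t)}{|t-\la-i\epsilon|^2}.
\]
On the other hand, by Fact~\ref{fact-WC} together with bilinearity,
\[
    \langle \Wej(\la+i\epsilon) v, v\rangle = \bigl\langle (J-(\la+i\epsilon)\Id)^{-1}\delta_0(v),\delta_0(v)\bigr\rangle = \int_\RR \frac{d\mu_v(t)}{t-\la-i\epsilon}.
\]
Since for any $A\in\Mdc$ one has $\langle (\Im A)v,v\rangle = \Im\langle Av,v\rangle$, taking imaginary parts gives
\[
    \bigl\langle \bigl(\Im \Wej(\la+i\epsilon)\bigr)v,v\bigr\rangle = \int_\RR \frac{\epsilon\, d\mu_v(t)}{|t-\la-i\epsilon|^2} = \epsilon\,\norm{U_\epsilon v}^2_{[0,+\infty]},
\]
which is \eqref{eq:114a}.

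Finally, for \eqref{eq:114b} I would combine \eqref{eq:114a} with the pointwise bound between operator norm and Hilbert--Schmidt norm. Namely, by Proposition~\ref{prop:18}\ref{prop:18:1} and \eqref{eq:65},
\[
    \norm{U_\epsilon(n)}^2 \leq \HSnorm{U_\epsilon(n)}^2 = \sum_{i=1}^d \norm{U_\epsilon(n)e_i}^2, \quad n\geq 0.
\]
Summing over $n$ yields $\norm{U_\epsilon}^2_{[0,+\infty]} \leq \sum_{i=1}^d \norm{U_\epsilon e_i}^2_{[0,+\infty]}$, and applying \eqref{eq:114a} with $v=e_i$ on the right-hand side gives
\[
    \norm{U_\epsilon}^2_{[0,+\infty]} \leq \frac{1}{\epsilon}\sum_{i=1}^d \bigl\langle \bigl(\Im \Wej(\la+i\epsilon)\bigr)e_i,e_i\bigr\rangle = \frac{\tr\bigl(\Im \Wej(\la+i\epsilon)\bigr)}{\epsilon}.
\]
There is no significant obstacle here: the only point that needs care is the clean identification of $U_\epsilon v$ on $\NN_0$ with the resolvent vector (the index $-1$ drops out of the sum $[0,+\infty]$), after which everything reduces to standard spectral-theoretic manipulations and the elementary norm inequality.
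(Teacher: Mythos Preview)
Your proof is correct and follows essentially the same approach as the paper: both identify $U_\epsilon$ with the Weyl matrix solution so that $U_\epsilon v$ restricted to $\NN_0$ equals $(J-(\la+i\epsilon)\Id)^{-1}\delta_0(v)$, and the derivation of \eqref{eq:114b} is identical. The only minor difference is that for \eqref{eq:114a} the paper takes the inner product of $(J-z\Id)(U(z)v)=\delta_0(v)$ with $U(z)v$ and invokes self-adjointness of $J$ directly, whereas you pass through the spectral integral representation; both routes are standard one-step computations.
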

\begin{proof}
We have $U_{\epsilon} = U(\la+i\eps)$ by \eqref{eq:14a}, so the result follows from \cite[Theorem 5.6]{Weyl}.
\end{proof}

We are ready to prove our main result.
\begin{proof}[Proof of Theorem~\ref{thm:9}]
By Proposition~\ref{prop:8} applied to $X=\Id$ and by \eqref{eq:114b} we have
\begin{align}
    \nonumber
	\norm{V_{\epsilon}}_{[0,\JLf{\la}(\epsilon)]} 
	&\leq 
	2 \norm{U_{\epsilon}}_{[0,\JLf{\la}(\epsilon)]} \\
	\nonumber
	&\leq
	2 \norm{U_{\epsilon}}_{[0,+\infty]} \\
	\label{eq:113}
	&\leq
	\frac{2}{\sqrt{\epsilon}} \sqrt{\tr \big( \Im \Wej(\la+i \epsilon) \big)}.
\end{align}
Now
\[
	\tr \big( \Im \Wej(\la+i \epsilon) \big)
	\leq
	d \norm{\Im \Wej(\la+i \epsilon)}
	\leq
	d \norm{\Wej(\la+i \epsilon)}
\]
(see, e.g., \cite[Proposition 2.2 {\em (ii), (v)}]{Weyl}). Thus,
\begin{equation} \label{eq:51}
	\norm{V_{\epsilon}}_{[0, \JLf{\la}(\epsilon)]}^2 \leq
	\frac{4 d}{\epsilon} \norm{W (\la + i \epsilon)}.
\end{equation}
On the other hand, by Remark~\ref{rem:1} and \eqref{eq:13}
\begin{align*}
    \norm{V_{\epsilon}}^2_{[0, \JLf{\la}(\epsilon)]}
    &= 
    \frac{\norm{V_{\epsilon}}_{[0, \JLf{\la}(\epsilon)]}}
    {\norm{P(\la)}_{[0, \JLf{\la}(\epsilon)]}} 
    \frac{\norm{V_{\epsilon}}_{[0, \JLf{\la}(\epsilon)]}} 
    {\norm{Q(\la)}_{[0, \JLf{\la}(\epsilon)]}}
    \norm{P(\la)}_{[0, \JLf{\la}(\epsilon)]}
    \norm{Q(\la)}_{[0, \JLf{\la}(\epsilon)]}
    \\
    &\geq 
    \frac{1}{\barrier(\la, \JLf{\la}(\epsilon))} 
    \big( 1 + \norm{\Wej(\la + i \epsilon)}^2 \big)
    \frac{1}{2 \epsilon}.
\end{align*}
Thus, by combining it with \eqref{eq:51} we obtain
\begin{equation} \label{eq:103}
    1 + \norm{\Wej(\la + i \epsilon)}^2 
    \leq 
    8d \barrier(\la, \JLf{\la}(\epsilon)) \norm{\Wej(\la + i \epsilon)}.
\end{equation}
This inequality is equivalent to
\[
    s^2 - 8d \barrier \big(\la, \JLf{\la}(\epsilon) \big) s + 1 \leq 0,
\]
where $s = \norm{\Wej(\la + i \epsilon)} \geq 0$. Its discriminant is equal to
\begin{equation} \label{eq:52}
    \Delta = \Big( 8d \barrier \big( \la, \JLf{\la}(\epsilon) \big) \Big)^2 - 4.
\end{equation}
Since $d \geq 1$ and $\barrier \big( \la, \JLf{\la}(\epsilon) \big) \geq 1$, we have
\begin{equation} \label{eq:53}
    \Delta \geq 8^2 - 4 = 60 > 0.
\end{equation}
Thus
\[
    s^2 - 8d \barrier \big( \la, \JLf{\la}(\epsilon) \big) s + 1 = 
    (s-s_-)(s-s_+) \leq 0,
\]
where
\[
    s_- = \frac{8d \barrier \big( \la, \JLf{\la}(\epsilon) \big) - \sqrt{\Delta}}{2} \quad \text{and} \quad
    s_+ = \frac{8d \barrier \big( \la, \JLf{\la}(\epsilon) \big) + \sqrt{\Delta}}{2}.
\]
By \eqref{eq:53} we see that $s_-, s_+ \in \RR$. Consequently, by \eqref{eq:52} we have $s_-, s_+ > 0$. Thus,
\[
    s_- \leq \norm{\Wej(\la + i \epsilon)} \leq s_+
\]
and \eqref{eq:117} follows.

The proof of \eqref{eq:116} is even simpler. Namely, let $v \in \CC^d$. Then by Proposition~\ref{prop:8} applied to $X=E^v$ (see \eqref{def:Ev}), \cite[Proposition 2.2  {\em(iv)}]{Weyl} and \eqref{eq:114a}
\[
    \norm{V_\epsilon v}_{[0, \JLf{\la}(\epsilon)]}^2 
    \leq 
    4 \norm{U_\epsilon v}_{[0, \JLf{\la}(\epsilon)]}^2
    =
    \frac{4}{\epsilon} 
    \big\langle \big( \Im \Wej(\la + i \epsilon) \big) v, v \big\rangle_{\CC^d}.
\]
Now again by \cite[Proposition 2.2  {\em(iv)}]{Weyl}, by  Remark~\ref{rem:1} and by \eqref{eq:13}
\begin{align*}
    \norm{V_\epsilon v}_{[0, \JLf{\la}(\epsilon)]}^2 
    &=
    \norm{V_\epsilon (E^v)}_{[0, \JLf{\la}(\epsilon)]}^2 \\
    &=
    \frac
    {\norm{V_\epsilon (E^v)}_{[0, \JLf{\la}(\epsilon)]}}
    {\norm{P(\la)}_{[0, \JLf{\la}(\epsilon)]}}
    \frac
    {\norm{V_\epsilon (E^v)}_{[0, \JLf{\la}(\epsilon)]}}
    {\norm{Q(\la)}_{[0, \JLf{\la}(\epsilon)]}}
    \norm{P(\la)}_{[0, \JLf{\la}(\epsilon)]} 
    \norm{Q(\la)}_{[0, \JLf{\la}(\epsilon)]} \\
    &\geq 
    \frac{1}{\barrier \big(\la, \JLf{\la}(\epsilon) \big)} \norm{v}_{\CC^d}^2 \frac{1}{2 \epsilon}.
\end{align*}
Therefore, 
\[
    \frac{1}{\barrier \big(\la, \JLf{\la}(\epsilon) \big)} \norm{v}_{\CC^d}^2 \leq 
    8 \big\langle \big( \Im \Wej(\la + i \epsilon) \big) v, v \big\rangle_{\CC^d}
\]
from which \eqref{eq:116} follows. The proof is complete.
\end{proof}

\section{Some sufficient conditions for nonsubordinacy} \label{sec:6}
In this section we extend some well-known conditions implying nonsubordinacy from $d=1$ to the general case.

\subsection{Generalized Last--Simon and Behncke--Stolz conditions}
\begin{definition}
Let $G \subset \RR$ be non-empty. We say that a s.a.  Jacobi matrix $J$ satisfies \emph{Generalized Last--Simon} (GLS in short) condition on $G$ if
\begin{equation} \label{eq:37b}
    \liminf_{n \to +\infty} 
    \frac{1}{\rho_n} 
    \sum_{k=1}^{n} \norm{R_k(\la)}^2 < +\infty, \quad \la \in G,
\end{equation}
where $R_k$ is $k$-step transfer matrix (see \eqref{eq:55}) and
\begin{equation} \label{eq:44}
     \rho_n := \sum_{k=0}^{n-1} \frac{1}{\norm{A_k}}.
\end{equation}
Moreover, if \eqref{eq:37b} is finite even after taking the supremum over $\la \in G$, we say that $J$ satisfies \emph{uniform GLS} condition on $G$.
\end{definition}

GLS condition has been introduced in \cite{LastSimon1999} for scalar Jacobi matrices with $A_n \equiv 1$. It was shown in \cite[Theorem 1.1]{LastSimon1999} that the set of $\la \in \RR$ where \eqref{eq:37b} is fulfilled is a~minimal support of the a.c. part of the spectral measure of $J$. Similarly, a variant of it has been studied for bounded block Jacobi matrices in \cite{Oliveira2022} with a similar conclusion. Thus this condition seems to be the right extension to possibly unbounded Jacobi matrices. However, we would like to stress that in the unbounded case the set where \eqref{eq:37b} is satisfied might no longer be a support of the a.c. part of the spectral measure of $J$ even for $d=1$, see Example~\ref{ex:3}.

If the stronger variant of \eqref{eq:37b}, namely,
\begin{equation} \label{eq:37a}
    \limsup_{n \to +\infty} 
    \frac{1}{\rho_n} 
    \sum_{k=1}^{n} \norm{R_k(\la)}^2 < +\infty, \quad \la \in G,
\end{equation}
holds, then we say that $J$ satisfies \emph{Generalized Behncke--Stolz} (GBS in short) condition on $G$. GBS condition has been introduced in \cite[Remark 2.3]{Janas1999} to study spectral properties of scalar unbounded Jacobi matrices. This condition turned out to be very convenient in the study of various classes of unbounded Jacobi matrices, see e.g. \cite{JanasNaboko2001, Janas1999, Moszynski2003, JanasNabokoStolz2004, JanasMoszynski2002}.

Let us start with an observation that the Carleman condition is necessary for GLS to hold. 

\begin{proposition} \label{prop:20}
Suppose that a Jacobi matrix $J$ satisfies \eqref{eq:37b} for some non-empty $G \subset \CC$. Then $J$ is s.a. iff the Carleman condition is satisfied. 
\end{proposition}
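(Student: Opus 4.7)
The easy direction, \emph{Carleman} $\Rightarrow$ \emph{self-adjointness}, is already recorded just below \eqref{eq:128'} via \cite[Theorem VII-2.9]{Berezanskii1968}. So I would only need to prove the reverse implication, which I would do by contrapositive: assume the Carleman series $\sum_n \norm{A_n}^{-1}$ converges, and deduce that $J$ fails to be self-adjoint. Under this failure, the weight sequence $\rho_n$ from \eqref{eq:44} is non-decreasing and uniformly bounded, say $\rho_n \to \rho_\infty \in (0, \infty)$.

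Pick any $\la \in G$. The GLS hypothesis \eqref{eq:37b} furnishes a subsequence $n_j \to \infty$ along which the ratios $\rho_{n_j}^{-1} \sum_{k=1}^{n_j} \norm{R_k(\la)}^2$ stay bounded. Because $\rho_n \leq \rho_\infty$, this gives a uniform bound on $\sum_{k=1}^{n_j} \norm{R_k(\la)}^2$, and monotonicity of the partial sums of the non-negative terms $\norm{R_k(\la)}^2$ promotes it to the global bound $\sum_{k=1}^{\infty} \norm{R_k(\la)}^2 < \infty$. The block form \eqref{eq:84} then forces $P(\la), Q(\la) \in \ldnmj{\Mdc}$. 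By Fact~\ref{factsolve-ma}\ref{factsolve-ma:2} every $U \in \MGEV{\la}$ has the shape $P(\la) S + Q(\la) T$ and therefore lies in $\ldnmj{\Mdc}$; propagating through Fact~\ref{factsolve-ma-ve}, every $u \in \GEV{\la}$ lies in $\ldnmj{\CC^d}$.

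It remains to convert this abundance of square-summable generalized eigenvectors at a single $\la$ into failure of self-adjointness. If $\la \in \CC \setminus \RR$, self-adjointness of $J$ would put $\la \notin \sigma(J)$ and Proposition~\ref{duzo-malo-wl2} would then force $\dim (\GEVld{\la}) \leq d$, in direct conflict with the $2d$-dimensional space of $\ell^2$ egevs obtained above. In the remaining case $\la \in \RR$, I would invoke \cite[Theorem 1.3]{Dyukarev2020} --- the same criterion already used inside the proof of Theorem~\ref{prop:16} --- which rules out self-adjointness as soon as every generalized eigenvector at some $\la$ is $\ell^2$-summable. The main obstacle is precisely this last step: the real-$\la$ case is the one in which neither Proposition~\ref{duzo-malo-wl2} nor the classical Weyl alternative applies, and one genuinely has to rely on Dyukarev's ``$\ell^2$-surplus'' criterion to close the argument; everything preceding it is a short bookkeeping chain that I expect to produce no surprises.
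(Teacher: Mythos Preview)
Your argument is correct and matches the paper's own proof almost verbatim: assume Carleman fails so that $\rho_n$ is bounded, use the GLS hypothesis to get $\sum_k \norm{R_k(\la)}^2 < \infty$, extract $P(\la),Q(\la)\in\ell^2$ from \eqref{eq:84}, and conclude via \cite[Theorem~1.3]{Dyukarev2020} that $J$ is not self-adjoint. The paper does not split into the cases $\la\in\RR$ versus $\la\notin\RR$; it simply invokes Dyukarev's criterion directly for the chosen $z\in G$, so your separate treatment of non-real $\la$ through Proposition~\ref{duzo-malo-wl2} is a valid but unnecessary detour, and the ``main obstacle'' you flag in the real case is not an obstacle at all---it is precisely the one-line citation the paper uses.
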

\begin{proof}
Let $z \in G$. Then by \eqref{eq:84}
\[
    \begin{pmatrix}
        0 & 0 \\
        Q_n(z) & 0
    \end{pmatrix}
    =
    \begin{pmatrix}
        0 & 0 \\
        0 & \Id
    \end{pmatrix}
    R_n(z)
    \begin{pmatrix}
        \Id & 0 \\
        0 & 0
    \end{pmatrix} 
    \quad \text{and} \quad
    \begin{pmatrix}
        0 & 0 \\
        0 & P_n(z)
    \end{pmatrix}
    =
    \begin{pmatrix}
        0 & 0 \\
        0 & \Id
    \end{pmatrix}
    R_n(z)
    \begin{pmatrix}
        0 & 0 \\
        0 & \Id
    \end{pmatrix}.
\]
Thus,
\[
    \norm{Q_n(z)}^2 + \norm{P_n(z)}^2 
    \leq 
    2 \norm{R_n(z)}^2
\]
and by \eqref{eq:37b}
\[
    \liminf_{n \to \infty}
    \frac{1}{\rho_n} \sum_{k=1}^n 
    \big( \norm{Q_n(z)}^2 + \norm{P_n(z)}^2 \big) < \infty.
\]
Now, if the Carleman condition's is not satisfied, then $\lim_{n \to \infty} \rho_n$ is non-zero and finite, and consequently,
\[
    \liminf_{n \to \infty}
    \sum_{k=1}^n 
    \big( \norm{Q_n(z)}^2 + \norm{P_n(z)}^2 \big) < \infty,
\]
which by \cite[Theorem 1.3]{Dyukarev2020} implies that $J$ is not s.a.
\end{proof}

Below, in Theorem~\ref{thm:5}, we will show that (uniform) GLS condition implies (uniform) $\barrierTR$-nonsubordinacy on $G$ for the barrier \eqref{eq:120}.
\begin{proposition} \label{prop:14}
For any $\la \in \RR$ and any $n \in \NN$ we have
\[
    \frac{\norm{R(\la)}_{[1,n]}^2 }{\lnorm{R(\la)}_{[1,n]}^2} \leq 
    \Bigg( \frac{1}{\rho_n} \sum_{k=1}^n \norm{R_k (\la)}^2 \Bigg)^2.
\]
\end{proposition}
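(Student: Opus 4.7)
The plan is to extract from the Liouville--Ostrogradsky-type identity \eqref{eq:36} a pointwise lower bound linking $\norm{R_k(\la)}$, $\lnorm{R_k(\la)}$, and $\norm{A_{k-1}}$, and then combine it with the Cauchy--Schwarz inequality.

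First, I would specialise \eqref{eq:36} to the case $z=\la\in\RR$, so that $\overline{z}=\la$ and
\[
    \big(R_k(\la)\big)^{-1} =
    \begin{pmatrix} 0 & \Id \\ -\Id & 0 \end{pmatrix}
    \big(R_k(\la)\big)^*
    \begin{pmatrix} 0 & A_{k-1} \\ -A_{k-1}^* & 0 \end{pmatrix}.
\]
The first matrix on the right has operator norm $1$, and a direct computation of $M^*M$ for $M=\begin{pmatrix} 0 & A_{k-1} \\ -A_{k-1}^* & 0 \end{pmatrix}$ gives $\norm{M}=\norm{A_{k-1}}$. Taking norms and using \eqref{eq:23a} yields
\[
    \frac{1}{\lnorm{R_k(\la)}} = \norm{\big(R_k(\la)\big)^{-1}} \leq \norm{R_k(\la)}\,\norm{A_{k-1}},
\]
hence the key pointwise inequality
\[
    \norm{R_k(\la)}\,\lnorm{R_k(\la)} \geq \frac{1}{\norm{A_{k-1}}}, \qquad k\geq 1.
\]

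Next, summing over $k=1,\ldots,n$ and applying the Cauchy--Schwarz inequality in $\RR^n$,
\[
    \rho_n = \sum_{k=1}^{n} \frac{1}{\norm{A_{k-1}}}
    \leq \sum_{k=1}^{n} \norm{R_k(\la)}\,\lnorm{R_k(\la)}
    \leq \bigg(\sum_{k=1}^{n}\norm{R_k(\la)}^2\bigg)^{1/2}\bigg(\sum_{k=1}^{n}\lnorm{R_k(\la)}^2\bigg)^{1/2}.
\]
Here I use the reindexing $\sum_{k=1}^n 1/\norm{A_{k-1}} = \sum_{j=0}^{n-1} 1/\norm{A_j}=\rho_n$ from \eqref{eq:44}. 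Since $n$ is an integer, the fractional-part terms in \eqref{eq:105a} and \eqref{eq:105c} vanish, so the two sums above are exactly $\norm{R(\la)}_{[1,n]}^2$ and $\lnorm{R(\la)}_{[1,n]}^2$ respectively. Squaring gives
\[
    \rho_n^2 \leq \norm{R(\la)}_{[1,n]}^2 \cdot \lnorm{R(\la)}_{[1,n]}^2.
\]

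Finally, I would rearrange this as $\lnorm{R(\la)}_{[1,n]}^{-2} \leq \rho_n^{-2}\,\norm{R(\la)}_{[1,n]}^2$, multiply both sides by $\norm{R(\la)}_{[1,n]}^2$, and recognize the right-hand side as the square of $\rho_n^{-1}\sum_{k=1}^n\norm{R_k(\la)}^2$, which yields the claimed bound.

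There is no real obstacle here: the only substantive input is the identity \eqref{eq:36}, and the rest is Cauchy--Schwarz together with careful bookkeeping of the index shift between $R_k$ and $A_{k-1}$; note in particular that the subscript $k-1$ ranges over $\{0,\ldots,n-1\}$, so the anomalous convention $A_{-1}=-\Id$ from \eqref{A-1} is never invoked.
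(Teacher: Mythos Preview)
Your proof is correct and follows essentially the same path as the paper: both start from the norm bound $\norm{R_k(\la)^{-1}} \leq \norm{A_{k-1}}\,\norm{R_k(\la)}$ extracted from \eqref{eq:36}, and both end at the same product inequality $\rho_n^2 \leq \norm{R(\la)}_{[1,n]}^2 \lnorm{R(\la)}_{[1,n]}^2$. The only cosmetic difference is that you apply Cauchy--Schwarz directly to the sequences $(\norm{R_k(\la)})$ and $(\lnorm{R_k(\la)})$, whereas the paper phrases the same step as the weighted harmonic--arithmetic mean inequality with weights $1/\norm{A_{k-1}}$ and values $\norm{A_{k-1}}\norm{R_k(\la)}^2$; your formulation is slightly more direct.
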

\begin{proof}
From the formula joining $R_k^{-1}(\la)$ and $R_k(\la)$  (see,  e.g., \cite[formula (4.34)]{Weyl}) we get
\[
    \norm{R_k^{-1}(\la)} \leq 
    \norm{A_{k-1}} \norm{R_k(\la)}.
\]
Thus (e.g., using   \cite[formula (2.2)]{Weyl})
\[
    \lnorm{R_k(\la)}^2 = 
    \frac{1}{\norm{R_k^{-1}(\la)}^2} \geq 
    \frac{1}{\norm{A_{k-1}}^2 \norm{R_k(\la)}^2}.
\]
Therefore,
\begin{equation} \label{eq:39}
    \bigg( 
    \frac{1}{\rho_n}
    \sum_{k=1}^n \lnorm{R_k(x)}^2 \bigg)^{-1}
    \leq 
    \bigg( 
    \frac{1}{\rho_n}
    \sum_{k=1}^n \frac{1}{\norm{A_{k-1}}} \frac{1}{\norm{A_{k-1}} \norm{R_k(\la)}^2}
    \bigg)^{-1}.
\end{equation}
Since the mapping $(0, +\infty) \ni x \mapsto x^{-1}$ is convex, Jensen's inequality implies
\[
    \bigg( 
    \frac{1}{\rho_n}
    \sum_{k=1}^n \frac{1}{\norm{A_{k-1}}} \frac{1}{\norm{A_{k-1}} \norm{R_k(\la)}^2}
    \bigg)^{-1}
    \leq
    \frac{1}{\rho_n} \sum_{k=1}^n \frac{1}{\norm{A_{k-1}}} \norm{A_{k-1}} \norm{R_k(\la)}^2
    =
    \frac{1}{\rho_n} \sum_{k=1}^n \| R_k(\la) \|^2,
\]
which combined with \eqref{eq:39} ends the proof.
\end{proof}

\begin{theorem} \label{thm:5}
Let $G \in \Bor(\RR)$ be non-empty. Suppose that $J$ satisfies (uniform) GLS condition on $G$. Then $J$ satisfies (uniform) $\barrierTR$-nonsubordinacy condition on $G$. Consequently, $J$ is absolutely continuous in $G$ and $\clleb{G} \subset \sigmaAC(J)$.
\end{theorem}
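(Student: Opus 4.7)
The plan is to deduce $\barrierTR$-nonsubordinacy directly from GLS via the quantitative comparison between the sum of squared norms of transfer matrices and the quotient $\norm{R(\la)}_{[1,n]}/\lnorm{R(\la)}_{[1,n]}$ provided by Proposition~\ref{prop:14}, and then to invoke the abstract spectral result Theorem~\ref{thm:10}.

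First, I would fix $\la \in G$ and combine the definition \eqref{eq:120} of the transfer matrix barrier with Proposition~\ref{prop:14} to obtain, for every integer $n \geq 1$,
\[
    \barrierTR(\la, n) = 8 \left( \frac{\norm{R(\la)}_{[1,n]}}{\lnorm{R(\la)}_{[1,n]}} \right)^2 \leq 8 \left( \frac{1}{\rho_n} \sum_{k=1}^n \norm{R_k(\la)}^2 \right)^2.
\]
Since the continuous liminf over $t \to +\infty$ is bounded above by the discrete liminf over integer $n \to +\infty$ (taking the subsequence $t = n \in \NN$), the GLS condition \eqref{eq:37b} immediately yields
\[
    \liminf_{t \to +\infty} \barrierTR(\la, t) \leq 8 \left( \liminf_{n \to +\infty} \frac{1}{\rho_n} \sum_{k=1}^n \norm{R_k(\la)}^2 \right)^2 < +\infty,
\]
which is exactly the $\barrierTR$-nonsubordinacy condition \eqref{eq:uni-nonsub} on $G$. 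In the uniform case, the constants are uniform in $\la \in G$, so taking the supremum over $\la \in G$ on both sides gives the uniform version \eqref{eq:strong-nonsub}.

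Next, recall from Corollary~\ref{cor:4} that $\barrierTR$ is indeed a barrier for $J$ on $\RR$, hence its restriction to $G$ is a barrier on $G$ in the sense of Definition~\ref{def:barrier}. The hypotheses of Theorem~\ref{thm:10} are therefore satisfied: $J$ is self-adjoint (this is part of the setup of the GLS condition), $G \in \Bor(\RR)$, $\barrierTR$ is a barrier, and we have just verified $\barrierTR$-nonsubordinacy on $G$. Applying Theorem~\ref{thm:10}\ref{thm:10:3} yields that $J$ is absolutely continuous in $G$ and $\clleb{G} \subset \sigmaAC(J)$, which is the final conclusion.

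There is no real obstacle here; the whole point of having built the barrier machinery and of Proposition~\ref{prop:14} is precisely to make this deduction essentially formal. The one small technical point to handle carefully is the passage from the discrete GLS expression, which only involves integers $n$, to the continuous liminf in $t$; but this is immediate because $\NN \subset [1,+\infty)$ and liminf is monotone under enlarging the index set.
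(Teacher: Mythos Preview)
The proposal is correct and follows essentially the same route as the paper: invoke Corollary~\ref{cor:4} to know that $\barrierTR$ is a barrier, use Proposition~\ref{prop:14} together with the GLS hypothesis to verify (uniform) $\barrierTR$-nonsubordinacy, and then apply Theorem~\ref{thm:10}. Your write-up is in fact slightly more careful than the paper's, since you make explicit the passage from the integer-indexed inequality of Proposition~\ref{prop:14} to the continuous liminf in $t$, and you spell out the $x\mapsto x^2$ step in the uniform case.
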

\begin{proof}
By Corollary~\ref{cor:4} the function~\eqref{eq:120} is a barrier. Since by Proposition~\ref{prop:14} we have
\[
    \liminf_{t \to +\infty} \barrierTR(\la, t) < +\infty
\]
the operator $J$ satisfies (uniform) $\barrierTR$-nonsubordinacy condition on $G$. Thus, the result follows from Theorem~\ref{thm:10}.
\end{proof}

In the following example we show that even for $d=1$ the uniform barrier nonsubordinacy condition is substantially weaker than GLS.

\begin{example} \label{ex:3}
Let $d=1$. Consider numbers satisfying $\kappa \in (1, \tfrac{3}{2}], f>-1, g>-1$ and $\kappa + 2 g - 2 f < 0$.
Consider Jacobi parameters
\[
    A_n = (n+1)^\kappa \Big( 1 + \frac{f}{n+1} \Big), \quad
    B_n = 2 (n+1)^\kappa \Big( 1 + \frac{g}{n+1} \Big).
\]
Then the Carleman condition for $J$ is {\bf not} satisfied but by \cite[Example 1.3]{jordan2} $J$ is s.a., so $J$ does not satisfy GLS for any non-empty $G \subset \RR$. We shall prove that we can apply Theorem~\ref{thm:10-intr} for any compact interval $K \subset \RR$ with non-empty interior. Indeed, by \cite[Example 1.3]{jordan2} (see also \cite[Theorem B]{jordan2}) there are constants $c_1>0, c_2>0$ such that for any $\lambda \in K$, any $U \in \MGEV{\lambda}$, satisfying $\norm{U_{-1}}^2 + \norm{U_0}^2 = 1$, and any $n \geq 1$
\[
    c_1 \leq \frac{1}{\tilde{\rho}_n} \norm{U}^2_{[0,n]} \leq c_2,
\]
where
\[
    \tilde{\rho}_n = \sum_{k=0}^n \frac{\sqrt{k+1}}{a_k}, \quad n \geq 0.
\]
Thus, for any $U,V \in \MGEV{\lambda}$ satisfying \eqref{eq:144}
\begin{equation} \label{eq:146'}
    \frac{\norm{U}^2_{[0,n]}}{\norm{V}^2_{[0,n]}} 
    \leq 
    \frac{c_2}{c_1}.
\end{equation}
Let us define $\barrier : K \times [1, \infty)$ by $\barrier(\lambda, t) = \frac{c_2}{c_1}$. Then by \eqref{eq:146'} and \cite[Corollary 2.5]{Weyl} the function $\barrier$ is a barrier for $J$ on $K$ and $J$ is uniformly $\barrier$-nonsubordinate on $K$. Therefore, by Theorem~\ref{thm:10-intr} \  $J$ is absolutely continuous on $K$ and $K \subset \sigmaAC(J)$. Since $K$ was arbitrary, we get that $J$ is absolutely continuous on $\RR$ and $\sigmaAC(J) = \RR$.
\end{example}

\subsection{Homogenous class ($H$ class) condition} \label{sec:6.3}
For the arbitrary size $m\times m$ of matrices this class is given as follows (see, e.g.,  
\cite[Definition 2.3]{Moszynski2019}).
\begin{definition}
Let $(C_n)_{n \geq n_0}$ be a sequence of complex $m \times m$ matrices. 
We say that $(C_n)_{n \geq n_0} \in H$ if there is a constant $c>0$ such that for any $n \geq n_0$
\begin{equation} \label{eq:14}
    \| C_n \cdot C_{n-1} \cdot \ldots \cdot C_{n_0} \| 
    \leq 
    c \prod_{k=n_0}^n |\det C_k|^{1/m}.
\end{equation}
\end{definition}

The concept of $H$ class was introduced in \cite{Moszynski2003} (see also \cite[Lemma 1.8]{JanasMoszynski2002}) as a sufficient condition for GBS for scalar Jacobi matrices. It turned out that sometimes this stronger condition is somewhat easier to show than to show ``ditectly'' GBS. Later, in \cite{Moszynski2019} this notion has been extended to block Jacobi matrices. An important consequence of Theorems 1.7 and 2.7 from \cite{Moszynski2019} is that the sequence of transfer matrices $\big( T_n(\la) \big)_{n \in \NN_0}$ belongs to $H$ iff there exists a constant $c > 0$ such that
\begin{equation} 
    \label{eq:15b}
    c \norm{R_n(\lambda)} \leq \lnorm{R_n(\lambda)}, \quad n \geq 1.
\end{equation}

In the following we show that in general $H$ class implies $\barrierTR$-nonsubordinacy condition, i.e., with the transfer matrix barrier \eqref{eq:120}, which solves affirmatively the open problem \cite[Remark 2.8.4]{Moszynski2019}.
 \begin{theorem} \label{thm:8}
 Suppose that $J$ is s.a. 
 Let $G \in \Bor(\RR)$ be non-empty. Suppose that $\big( T_n(\la) \big)_{n \in \NN_0} \in H$ for any $\la \in G$. Then $J$ satisfies $\barrierTR$-nonsubordinacy condition on $G$. Consequently, $J$ is absolutely continuous in $G$ and $\clleb{G} \subset \sigmaAC(J)$.

 \end{theorem}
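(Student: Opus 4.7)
The plan is to reduce the statement to Theorem~\ref{thm:10} by showing that, under the hypothesis $\big(T_n(\la)\big)_{n \in \NN_0} \in H$ for every $\la \in G$, the transfer matrix barrier $\barrierTR$ defined in \eqref{eq:120} is in fact uniformly bounded in $t$ along each fixed $\la \in G$. In particular \eqref{eq:uni-nonsub} will hold trivially.

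First, fix $\la \in G$. By the hypothesis and the equivalence $\ref{eq:15H} \Leftrightarrow \ref{eq:15b}$ in Proposition~\ref{thm:4} (applied with $m = 2d$, $n_0 = 0$, $C_n := T_n(\la)$ and $R_n = R_{n+1}(\la)$, in view of \eqref{eq:55}), there exist constants $c = c(\la) > 0$ and $C = C(\la) > 0$ such that
\[
    c\, \lnorm{R_k(\la)} \leq \norm{R_k(\la)} \leq C\, \lnorm{R_k(\la)}, \qquad k \geq 1.
\]
Squaring and summing over $k$ in the sense of the affine interpolation recalled in Observation~\ref{prop:15}, we immediately obtain
\[
    c^2\, \lnorm{R(\la)}_{[1,t]}^2 \leq \norm{R(\la)}_{[1,t]}^2 \leq C^2\, \lnorm{R(\la)}_{[1,t]}^2, \qquad t \geq 1.
\]
Substituting this into the definition \eqref{eq:120} of $\barrierTR$ yields
\[
    \barrierTR(\la, t) = 8 \left(\frac{\norm{R(\la)}_{[1,t]}}{\lnorm{R(\la)}_{[1,t]}}\right)^2 \leq 8 \left(\frac{C}{c}\right)^2, \qquad t \geq 1.
\]
In particular $\liminf_{t \to +\infty} \barrierTR(\la, t) < +\infty$ for each $\la \in G$, so $J$ is $\barrierTR$-nonsubordinate on $G$ in the sense of Definition~\ref{def:nonsubord} (recall that $\barrierTR$ is indeed a barrier by Corollary~\ref{cor:4}).

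Having verified the hypotheses of Theorem~\ref{thm:10} for the barrier $\barrierTR$ on $G$, the conclusions \ref{thm:10:1}--\ref{thm:10:3} follow at once, and in particular $J$ is absolutely continuous in $G$ and $\clleb{G} \subset \sigmaAC(J) \subset \sigma(J)$. The only substantive step is the passage from the pointwise asymptotic equivalence $\norm{R_k(\la)} \asympp{k} \lnorm{R_k(\la)}$ provided by Proposition~\ref{thm:4} to the corresponding equivalence of the interpolated semi-norms, which is routine; no uniformity in $\la$ is needed since $\barrierTR$-nonsubordinacy (as opposed to its uniform version) only requires pointwise control.
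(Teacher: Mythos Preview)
Your proposal is correct and follows essentially the same route as the paper: use the equivalence \eqref{eq:15H}$\Leftrightarrow$\eqref{eq:15b} from Proposition~\ref{thm:4} to obtain a pointwise bound $\norm{R_k(\la)} \leq C(\la)\,\lnorm{R_k(\la)}$, pass to the interpolated semi-norms, conclude that $\barrierTR(\la,\cdot)$ is bounded on $[1,\infty)$, and then invoke Theorem~\ref{thm:10}. The only cosmetic difference is that the paper uses a single constant (only the upper bound $\norm{R_k(\la)}^2 \leq C(\la)^2 \lnorm{R_k(\la)}^2$ is needed, yielding $\barrierTR(\la,t)\leq 8C(\la)^2$), whereas your bound $8(C/c)^2$ is slightly looser but equally sufficient.
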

 \begin{proof}
 Observe that by \eqref{eq:15b} there is $c(\la) > 0$ such that for any $k \geq 1$ we have $c(\la) \norm{R_k(\la)}^2 \leq \lnorm{R_k(\la)}^2$. Thus, for any $t \geq 1$ we get
 \[
     c(\la) \norm{R(\la)}_{[1,t]}^2 
     \leq
     \lnorm{R(\la)}_{[1,t]}^2.
 \]
 In other words,
 \[
     \frac{\norm{R(\la)}_{[1,t]}^2 }{\lnorm{R(\la)}_{[1,t]}^2} \leq \frac{1}{c(\la)}
 \]
 Thus
 \[
     \limsup_{t \to +\infty} \barrierTR(\la, t) \leq 
     \frac{8}{c(\la)} < +\infty.
 \]
 Consequently, the operator $J$ satisfies $\barrierTR$-nonsubordinacy condition on $G$. Thus, the result follows from Theorem~\ref{thm:10}.
\end{proof}

\section{Examples and applications} \label{sec:7}
In this section we show some examples and counterexamples illustrating the applicability of our results.
 
\subsection{''The invertibility of the density of $M$'' does not guarantee the nonsubordinacy}
\label{susec:7-1}

Let us observe that direct sums of purely absolutely continuous scalar Jacobi matrices with different asymptotic behaviour of their generalized eigenvectors does not satisfy our nonsubordinacy condition. Below, we shall present a simple example of this kind.
\begin{example} \label{ex:2}
Consider diagonal $2\times 2$ blocks defining  BJO $J$ (the maximal one):
\[
    A_n =
    \begin{pmatrix}
        a_n^{(1)} & 0 \\
        0 & a_n^{(2)}
    \end{pmatrix},
    \quad
    B_n =
    \begin{pmatrix}
        b_n^{(1)} & 0 \\
        0 & b_n^{(2)}
    \end{pmatrix},
\]
where $a^{(i)} = \big( a_n^{(i)} \big)_{n \in \NN_0}, b^{(i)} = \big( a_n^{(i)} \big)_{n \in \NN_0}$ are Jacobi parameters of  two scalar Jacobi matrices $\calJ^{(i)}$ for $i=1,2$, and  let $J^{(i)}$ be the appropriate  (also maximal) JO. 
Let us choose
\[
    a_n^{(i)} = (n+1)^{\alpha_i}, \quad 
    b_n^{(i)} = 0, \quad n \geq 0, \ i=1,2,
\]
where $\alpha_1,\alpha_2 \in (0,1]$ and $\alpha_1 > \alpha_2$.
Then both $J^{(i)}$ are s.a. Moreover,  by \cite[Theorem 1]{PeriodicII},  they are absolutely continuous,  $\sigmaAC(J^{(i)})=\RR$, $\sigmaS(J^{(i)})=\emptyset$ for  $i=1,2$,  the corresponding spectral measures $\mu^{(i)}$ of $J^{(i)}$ are absolutely continuous   and  continuous positive densities w.r.t.  the Lebesgue measure can be chosen for them. Thus, $J$ is s.a., $J$ is absolutely continuous, $\sigmaAC(J)=\RR$, $\sigmaS(J)=\emptyset$,
and, it can be shown that the  matrix measure $M$ is a diagonal matrix with the measures $\mu^{(1)}$ and $\mu^{(2)}$ on the diagonal. In particular, there exists a density of $M$ which is strictly positive on $\RR$.

 Fix $\la \in \RR$. Let us  show that $J$ does not satisfy vector nonsubordinacy condition for  $\la$. Let us choose a non-zero $u^{(i)} \in \GEV{J^{(i)}, \la}$ for $i=1,2$. Set $u := \big( u^{(1)}_n e_1 \big)_{n \in \NN_{-1}}$ and $v := \big( u^{(2)}_n e_2 \big)_{n \in \NN_{-1}}$. Then $u,v \in \GEV{J, \la}$. According to \cite[Theorem A]{PeriodicI} 
\begin{equation} 
    \label{eq:151}
    \frac{\norm{u}_{[0,n]}^2}{\norm{v}_{[0,n]}^2} \asymp_n
    \frac{\sum_{k=0}^n \frac{1}{a_n^{(2)}}}{\sum_{k=0}^n \frac{1}{a_n^{(1)}}},
\end{equation}
where the symbol $\asymp_n$ above means that the ratio of both sides can be bounded from above and from below by some real positive constants  for all $n$s. 
By Stolz--Ces\`{a}ro lemma
\begin{equation}
    \label{eq:152}
    \lim_{n \to +\infty}
    \frac{\sum_{k=0}^n \frac{1}{a_n^{(2)}}}{\sum_{k=0}^n \frac{1}{a_n^{(1)}}} =
    \lim_{n \to +\infty} \frac{a_n^{(1)}}{a_n^{(2)}} = +\infty.
\end{equation}
Therefore, by \eqref{eq:151} and \eqref{eq:152} and \cite[Corollary 2.5]{Weyl} it easily follows that $J$ does not satisfy  nonsubordinacy condition for~$\la$ (see \cite[subsection  6.1]{Weyl} for some  more detailed discussion about the equivalence of the nonsubordinacy and the vector nonsubordinacy).
\end{example}

\subsection{Application to some classes of block Jacobi operators} \label{sec:appl-BJM}
In this section we would like to illustrate our results by showing that Jacobi operators considered in \cite[Theorem 2]{block2018} are in fact absolutely continuous in some explicit region of the real line.

Let us introduce a necessary notion. Let $X = (X_n)_{n \in \NN}$ be a sequence from a normed space $V$. Let $N \geq 1$ be a positive integer. We say that $X \in \calD_1^N$ if
\[
    \sum_{n=1}^{+\infty} \norm{X_{n+N} - X_n}_V < +\infty.
\]
Observe that if $X \in \calD_1^N$, then for any $i \in \{0, 1, \ldots, N-1 \}$ the sequence $(X_{kN+i})_{k \in \NN}$ is a Cauchy sequence.

\begin{theorem} \label{thm:11}
Let $J$ be a block Jacobi operator. Suppose that Jacobi parameters of $J$ satisfy for some integer $N \geq 1$
\begin{equation} \label{eq:100}
    \big( A_n^{-1} \big)_{n \in \NN},
    \big( A_n^{-1} B_n \big)_{n \in \NN},
    \big( A_n^{-1} A_{n-1}^* \big)_{n \in \NN} \in \calD_1^N.
\end{equation}
Then for any $i \in \{0, 1, \ldots, N-1 \}$ and any $z \in \CC$ the limit
\begin{equation} \label{eq:99}
    \calX_i(z) :=
    \lim_{\substack{n \to +\infty\\n \equiv i \bmod{N}}} T_{n+N-1}(z) \ldots T_{n+1}(z) T_n(z)
\end{equation}
exists, where $T_n$ is defined in \eqref{eq:21}. Suppose further that
for some $N$-periodic sequence of invertible matrices $(\calC_n)_{n \in \NN_0}$
\[
    \lim_{n \to +\infty} \norm{\frac{a_n}{\norm{a_n}} - \calC_n} = 0.
\]
Define a matrix
\[
    Q(z) = 
    \Re 
        \bigg[
        \begin{pmatrix}
            0 & -\calC_{N-1} \\
            \calC_{N-1}^* & 0
        \end{pmatrix}
        \calX_0(z) 
    \bigg], \quad z \in \CC
\]
and let
\[
    \Lambda = 
    \bigg\{ 
        \la \in \RR : Q(\lambda)
         \text{ is strictly positive or strictly negative on } \CC^{2d}
    \bigg\}.
\]
Then $\Lambda \subset \RR$ is open and $\big( T_n(\la) \big)_{n \in \NN} \in H$ 
for any $\la \in \Lambda$. Consequently, if Carleman condition is satisfied, then $J$ is absolutely continuous in $\Lambda$ and $\cl{\Lambda} \subset \sigmaAC(J)$.
\end{theorem}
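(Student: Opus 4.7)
The plan is to prove the theorem in three stages. First I would establish the existence of the limits $\calX_i(z)$ by a direct telescoping argument. Each transfer matrix $T_n(z)$ from \eqref{eq:21} is a first-degree polynomial expression in the three block quantities $-A_n^{-1}A_{n-1}^*$, $A_n^{-1}$ and $A_n^{-1}B_n$, all of which belong to $\calD_1^N$ by \eqref{eq:100}. Summability of $\sum_n\|X_{n+N}-X_n\|$ forces $(X_{kN+i})_{k\in\NN}$ to be Cauchy (hence convergent in $\Mddc$) along each residue class $i\bmod N$; the $N$-step product $T_{n+N-1}(z)\cdots T_n(z)$ is a polynomial in $N$ consecutive entries of these sequences, so convergence lifts from the residue classes to the product, yielding $\calX_i(z)$ and (by local uniformity in $z$) continuity of $\la\mapsto\calX_0(\la)$.

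The heart of the proof is the second stage, where the $H$-class property on $\Lambda$ has to be extracted. Following the strategy of \cite{block2018}, I would normalize the $N$-step transfer products by the scalar factor $\norm{A_{n+N-1}}\cdots\norm{A_n}$ and use the periodic asymptotic $A_n/\norm{A_n}\to\calC_n$ to rewrite the limit along $i=0$ as a matrix in which $\calX_0(\la)$ couples to $\calC_{N-1}$ through the symplectic form $\Omega=\begin{pmatrix}0 & -\calC_{N-1}\\ \calC_{N-1}^* & 0\end{pmatrix}$. The hypothesis that $\Re[\Omega\,\calX_0(\la)]$ is strictly positive or strictly negative on $\CC^{2d}$ then plays the role of a Lyapunov nondegeneracy: it forces the eigenvalues of the rescaled limit map to be of common modulus, so that the products $R_{kN}(\la)$ grow comparably in every direction. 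By Proposition~\ref{thm:4}, the equivalences \eqref{eq:15a}--\eqref{eq:15c} translate this uniform comparability into $(T_n(\la))_{n\in\NN_0}\in H$.

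With $(T_n(\la))_{n\in\NN_0}\in H$ in hand for every $\la\in\Lambda$, and with the Carleman condition \eqref{eq:128'} securing the self-adjointness of $J$, Theorem~\ref{thm:8} directly yields $\barrierTR$-nonsubordinacy on $\Lambda$, absolute continuity of $J$ in $\Lambda$, and $\clleb{\Lambda}\subset\sigmaAC(J)$. To upgrade $\clleb{\Lambda}$ to the ordinary closure $\cl{\Lambda}$, I would use the continuity of $\la\mapsto\calX_0(\la)$ established in the first stage: strict positivity (resp.\ strict negativity) of a continuous Hermitian-matrix-valued function is an open condition, so $\Lambda$ is open in $\RR$ and hence $\clleb{\Lambda}=\cl{\Lambda}$.

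I expect the main obstacle to be the Lyapunov step in stage two, namely transferring the single-point definiteness assumption on $\calX_0(\la)$ into the uniform comparability $\norm{R_n(\la)}\asympp{n}\lnorm{R_n(\la)}$ for the actual, non-periodic products rather than for their periodic idealizations. The summable perturbation recorded by $\calD_1^N$ is precisely what is needed to absorb the deviations into multiplicative constants, but controlling the interaction between this additive error and the multiplicative symplectic structure singled out by $\Omega$ requires care; this is where the argument of \cite{block2018} does the essential work and where the analytic content of the theorem lies.
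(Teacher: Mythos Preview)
Your plan is essentially the paper's own proof: the existence of $\calX_i(z)$ is handled exactly as you describe, the $H$-class property on $\Lambda$ is obtained by invoking \cite{block2018} (the paper cites \cite[Theorem~2]{block2018} directly to get the two-sided bound $c_1/\norm{A_n}\leq\norm{(u_{n-1},u_n)}^2\leq c_2/\norm{A_n}$ on normalized eigenvectors, from which $\norm{R_n(\la)}\asymp\lnorm{R_n(\la)}$ is immediate via \eqref{eq:15b}), and then Theorem~\ref{thm:8} plus the openness of $\Lambda$ finish the argument. One small correction: the mechanism you sketch in stage two---``eigenvalues of the rescaled limit map of common modulus''---is not quite what \cite{block2018} actually does; the definiteness of $\Re[\Omega\,\calX_0(\la)]$ is used there as a quadratic Lyapunov form whose monotone behaviour along solutions yields the eigenvector bound directly, without any spectral analysis of $\calX_0(\la)$, and it is this bound (not an eigenvalue statement) that the paper quotes.
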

\begin{proof}
First of all, let us observe that \eqref{eq:100} implies that for any $j \in \{0, 1, \ldots, N-1\}$ the sequence $(T_{kN+j})_{k \in \NN}$ is convergent. Thus, the limit \eqref{eq:99} exists. Since every entry of the product $T_{n+N-1}(z) \ldots T_{n}(z)$ is a polynomial of degree at most $N$, the limit is even locally uniform on $\CC$ and the function $\calX_i$ is continuous. So, the entries of the Hermitian matrix $Q$ are continuous as well. Let us recall that according to Sylvester's criterion the positivity/negativity of $Q(z)$ is determined by the appropriate signs of the principal minors of $Q(z)$, which is an open condition. Therefore, $\Lambda$ is an open subset of $\RR$. 

We are going to show that the condition~\eqref{eq:15b} is satisfied. Let us recall that according to \cite[Theorem 2]{block2018} for any compact interval $K \subset \Lambda$ there are constants $c_1 > 0, c_2 > 0$ such that for any normalized $u \in \GEV{\la}$, where $\la \in K$, and any $n \geq 1$ we have
\begin{equation} \label{eq:101}
    \frac{c_1}{\norm{A_n}} \leq 
    \norm{
    \begin{pmatrix}
        u_{n-1} \\
        u_n
    \end{pmatrix}
    }_{\CC^{2d}}^2
    \leq 
    \frac{c_2}{\norm{A_n}}.
\end{equation}
Recall that for any $u \in \GEV{\la}$
\[
    \begin{pmatrix}
        u_{n-1} \\
        u_n
    \end{pmatrix}
    = 
    R_n(\la)
    \begin{pmatrix}
        u_{-1} \\
        u_0
    \end{pmatrix}.
\]
Hence, having the notation above in mind we have
\[
    \lnorm{R_n(\la)}^2
    =
    \inf_{\substack{\norm{u_{-1}}_{\CC^d}^2 + \norm{u_0}_{\CC^d}^2 = 1\\u \in \GEV{\la}}} 
    \norm{
    \begin{pmatrix}
        u_{n-1} \\
        u_n
    \end{pmatrix}
    }_{\CC^{2d}}^2, \quad
    \norm{R_n(\la)}^2
    =
    \sup_{\substack{\norm{u_{-1}}_{\CC^d}^2 + \norm{u_0}_{\CC^d}^2 = 1\\u \in \GEV{\la}}} 
    \norm{
    \begin{pmatrix}
        u_{n-1} \\
        u_n
    \end{pmatrix}
    }_{\CC^{2d}}^2.
\]
Thus, \eqref{eq:101} implies
\begin{equation} \label{eq:143}
    \frac{c_1}{\norm{A_n}} \leq 
    \lnorm{R_n(\la)}^2 \leq 
    \norm{R_n(\la)}^2 \leq
    \frac{c_2}{\norm{A_n}},
\end{equation}
which shows that \eqref{eq:15b} is satisfied. Therefore, $\big( T_n(\la) \big)_{n \in \NN} \in H$ for any $\la \in \Lambda$. Finally, if the Carleman condition is satisfied, then by Theorem~\ref{thm:8}, $J$ is absolutely continuous on $\Lambda$ and $\clleb{\Lambda} \subset \sigmaAC(J)$. Since $\Lambda$ is open, by Proposition~\ref{cor:1}\ref{cor:1:2}, we get that $\clleb{\Lambda} = \cl{\Lambda}$. The proof is complete. 
\end{proof}

\begin{bibliography}{block,jacobi}
	\bibliographystyle{amsplain}
\end{bibliography}
\end{document}